 \newcommand{\Br}{\operatorname{Br}}
\newcommand{\rank}{{\operatorname{rank}}}
\newcommand{\rat}{{\operatorname{rat}}}
\newcommand{\CH}{\operatorname{CH}}
\newcommand{\Pic}{\operatorname{Pic}}
\newcommand{\Bl}{\operatorname{Bl}}
\newcommand{\Grass}{\operatorname{Grass}}
\newcommand{\supp}{\operatorname{supp}}
\newcommand{\C}{\mathbf{C}}
\renewcommand{\L}{\mathbf{L}}
\renewcommand{\P}{\mathbf{P}}
\newcommand{\N}{\mathbf{N}}
\newcommand{\Q}{\mathbf{Q}}
\newcommand{\Z}{\mathbf{Z}}
\newcommand{\un}{\mathbf{1}}
 \newcommand{\sL}{\mathcal{L}}
 \newcommand{\sC}{\mathcal{C}}
\newcommand{\sM}{\mathcal{M}}
\newcommand{\sO}{\mathcal{O}}
 \newcommand{\sF}{\mathcal{F}}
\newcommand{\sA}{\mathcal{A}}
 \newcommand{\sI}{\mathcal{I}}
 \newcommand{\sN}{\mathcal{N}}
 \newcommand{\sS}{\mathcal{S}}
 \newcommand{\sT}{\mathcal{T}}
 \numberwithin{equation}{section}
\theoremstyle{plain}
\newtheorem{thm}[equation]{Theorem}
\newtheorem{prop}[equation]{Proposition}
\newtheorem{lm}[equation]{Lemma}
\newtheorem{conj}[equation]{Conjecture}
\theoremstyle{definition}
\newtheorem{defn}[equation]{Definition}
\newtheorem{ex}[equation]{Example}
\newtheorem{rk}[equation]{Remark}
\begin{document}
 \title{K3 surfaces associated to a cubic fourfold }
 \author[C. Pedrini]{Claudio Pedrini}
\address{Dipartimento di Matematica \\ %
Universit\'a degli Studi di Genova \\ %
Via Dodecaneso 35 \\ %
16146 Genova \\ %
Italy}
\email{pedrini@dima.unige.it}

\begin{abstract}Let $X\subset \P^5$ be a smooth cubic fourfold. A well known conjecture asserts that $X$ is rational if and only if there an Hodge theoretically associated K3 surface $S$. The surface $S$ can be  associated to $X$ in two other different ways. If there is an equivalence of categories $\sA_X \simeq D^b(S,\alpha)$ where $\sA_X$ is the Kuznetsov component of $D^b(X)$ and $\alpha$ is a Brauer class,  or if there is an isomorphism between the transcendental motive $t(X)$ and the (twisted ) transcendental motive of  a K3 surface$S$. In this note we consider families  of cubic fourfolds  with a finite group of automorphisms and describe the cases where there is an associated  K3 surface in one of the above senses.\end{abstract}
\maketitle 

\section{Introduction}

Let $X$ be a complex smooth cubic fourfold in $\P^5$. Let $A(X)=H^4(X,\Z)\cap H^{2,2}(X)$ be the lattice of algebraic cycles. The fourfold $X$ is {\it special} if  the lattice $A(X)$ contains a class  $v$ that is not homologous to $h^2$, where $h$ is the class of a hyperplane section. A fourfold $X$ is  {\it very general} if $\rank A(X) =1$. For $X$ special let $ K_d=<h^2,v>$, where $d$ is the discriminant of $<h^2,v>$.\par
 A polarized K3 surface  $(S,L)$ of  degree $d$ and genus $g$,  with $L^2= 2g-2$ and $2g-2 =d$, is  said  to be associated to a cubic fourfold $X \in \sC_d$ if there is an isomorphism of Hodge structures
\begin{equation} \label{K3} K^{\perp}_d \simeq H^2(S, \Z)_{prim}(-1). \end{equation}
Let $\sC$ be the moduli space of smooth projective cubic fourfolds and let  $\sC_d \subset \sC$ be the {\it Hassett divisor }  parametrizing  special  cubic fourfolds  with a labelling of discriminant d, i.e. a positive defined rank-two primitive sublattice $K_d=<h^2,v> \subset A(X)$.
 B. Hassett  in [Hass] proved  that $X \in \sC_d$ has an associated  polarized K3 surface  of degree $d$ if and only if $d$ satisfies the following numerical condition\par
(*) $d>6$ and  $d$ is not divisible by 4,9 or a prime $p \equiv 2 (3)$.\par
More precisely: for $d$ satisfying (*)  $\sC_d$ is birational to $\sN_d$ if $d \equiv 2 (6) $, where $\sN_d$ is the moduli space of polarized K3 surfaces of degree d. Otherwise $\sC_d$ is birational to a quotient of $\sN_d$ by an involution, see [Hass, Cor.25].\par
The results of several Authors (Hassett,Kuznetsov, Addington-Thomas) suggest
that a cubic fourfold is rational if and only if it has an associated
K3 surface. Kuznetsov conjectured that X has an associated K3 surface
if and only if there exists a semi-orthogonal decomposition of the
derived category $D^b(X)$ of bounded complexes of coherent sheaves
\begin{equation}\label{Kuz} D^b(X)=< \sA_X, \sO_X(1),\sO_X(2)>\end{equation}
such that $\sA_X,$ is equivalent to the category $D^b(S)$,where $ S$ is K3 surface.The following result proved in [BLMNPS, Cor.29.7], shows that the two conditions are in fact equivalent
\begin{thm}\label{FMequiv} Let $X$ be cubic fourfold. Then $X$ has a Hodge-theoretically associated K3 surface if and only if there exists a smooth projective K3  surface $S$ and an equivalence $\sA_X \simeq D^b(S)$\end{thm}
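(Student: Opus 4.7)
The plan is to reduce the statement to the derived Torelli theorem for the Kuznetsov component $\sA_X$, which is the deep ingredient provided by [BLMNPS]. Recall that $\sA_X$ carries a natural weight--two Hodge structure on its Mukai lattice $\widetilde{H}(\sA_X, \Z)$, abstractly isomorphic to the K3 Mukai lattice $U^{\oplus 4} \oplus E_8(-1)^{\oplus 2}$. The canonical classes $\lambda_1, \lambda_2 \in \widetilde{H}(\sA_X, \Z)$, obtained by projecting the classes of $\sO_X(1)$ and $\sO_X(2)$ into $\sA_X$, span a positive definite rank--two sublattice $A_1(\sA_X)$, and its orthogonal complement in $\widetilde{H}(\sA_X, \Z)$ is Hodge isometric to $H^4(X, \Z)_{prim}(1)$.

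First I would handle the easier implication, assuming an equivalence $\Phi \colon \sA_X \iso D^b(S)$. Being of Fourier--Mukai type, $\Phi$ induces a Hodge isometry $\Phi_* \colon \widetilde{H}(\sA_X, \Z) \iso \widetilde{H}(S, \Z)$. The image $v = \Phi_*(\lambda_1)$ is an algebraic Mukai vector on $S$, and by passing to orthogonal complements $\Phi_*$ restricts to a Hodge isometry between the transcendental part of $\widetilde{H}(\sA_X, \Z)$ and the transcendental part of $\widetilde{H}(S, \Z)$. Under the identifications above, this descends to a Hodge isometry $K_d^{\perp} \iso H^2(S, \Z)_{prim}(-1)$ for a polarization of $S$ determined by the algebraic class $v$, so that $X$ belongs to some Hassett divisor $\sC_d$ with Hodge--theoretically associated K3 surface $S$.

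The harder direction is the converse, and this is where the main obstacle lies. Starting from a Hodge isometry $K_d^{\perp} \iso H^2(S, \Z)_{prim}(-1)$, the first step is to upgrade it to a Hodge isometry of full Mukai lattices $\widetilde{H}(\sA_X, \Z) \iso \widetilde{H}(S, \Z)$. Both lattices are abstractly isomorphic to $\widetilde{\Lambda} := U^{\oplus 4} \oplus E_8(-1)^{\oplus 2}$, and the transcendental part is already matched by hypothesis; a lattice--theoretic extension argument in the style of Nikulin, combined with adjustment by Hodge autoisometries of $\widetilde{H}(S, \Z)$ realised by autoequivalences of $D^b(S)$, produces the required extension to the full Mukai lattice. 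The second and decisive step is to invoke the derived global Torelli theorem for Kuznetsov components of cubic fourfolds proved in [BLMNPS]: any orientation preserving Hodge isometry $\widetilde{H}(\sA_X, \Z) \iso \widetilde{H}(S, \Z)$ is induced by a Fourier--Mukai equivalence $\sA_X \simeq D^b(S)$. This last step is the technical heart of the argument; it relies on the construction of Bridgeland stability conditions on $\sA_X$ and the identification of moduli spaces of stable objects in $\sA_X$ with known hyperk\"ahler varieties, and this is where I would expect to do the least routine work if reconstructing the proof.

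A small subtlety worth addressing at the end is the orientation of the Hodge isometry built in the first step: one must ensure it is orientation preserving, which can be arranged by composing if necessary with a sign autoisometry coming from the shift autoequivalence of $D^b(S)$, so that the hypothesis of the derived Torelli theorem is met.
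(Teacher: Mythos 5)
The paper does not actually prove this statement: it is quoted verbatim from [BLMNPS, Cor.~29.7], so there is no internal proof to compare against. Your sketch is a faithful reconstruction of the strategy of that reference (and of Addington--Thomas before it): the forward implication via the induced Hodge isometry of Mukai lattices restricted to transcendental parts, and the converse via extending the isometry on $K_d^{\perp}$ to the full Mukai lattice and then realising it by an equivalence. Two remarks on where the weight really sits. First, the ``Nikulin-style extension'' step is exactly Addington--Thomas's lattice theorem: the Hassett condition on $d$ is equivalent to the existence of a primitive algebraic class $v$ of square zero in the Mukai lattice of $\sA_X$, and this equivalence is what makes the extension work; it deserves to be stated as the pivot of the argument rather than as a routine adjustment. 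Second, what [BLMNPS] actually do with that isotropic class $v$ is not to invoke a pre-existing derived Torelli theorem for $\sA_X$, but to \emph{construct} the K3 surface as a moduli space $M_{\sigma}(v)$ of Bridgeland-stable objects in $\sA_X$ for a $v$-generic stability condition, and to show the universal family induces the equivalence; the ``any orientation-preserving Hodge isometry is induced by an equivalence'' form you cite is a stronger statement than is needed (and than is proved there). Since you flag this step as the one requiring the non-routine work, the proposal is an acceptable outline, but be aware that the theorem it leans on produces \emph{some} K3 surface $S$, which is all the statement requires, rather than realising a prescribed isometry with a prescribed $S$.
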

These results leaded to the following Conjecture
\begin{conj} \label{rat}  A smooth cubic fourfold $X$ is rational if and only if $X \in \sC_d$  with $d$ satisfying the  numerical condition in (*) \par
\end{conj}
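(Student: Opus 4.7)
The plan is to attack the two implications separately, since this is in fact a well-known open problem (the Kuznetsov--Hassett conjecture) and neither direction is known in full. For the direction (rationality $\Rightarrow$ condition (*)), my strategy is to pass through Theorem \ref{FMequiv}: it suffices to produce, from a rationality construction, an equivalence $\sA_X\simeq D^b(S)$ with $S$ a K3 surface. Concretely I would resolve an assumed birational map $X\dashrightarrow \P^4$ by a sequence of blow-ups of smooth centers, apply Orlov's blow-up formula on each side to track the Kuznetsov component through the corresponding semi-orthogonal decompositions, and argue that the non-trivial residual admissible subcategory is necessarily of the form $D^b(S)$ for a K3 surface. Theorem \ref{FMequiv} then places $X$ in some $\sC_d$ with $d$ satisfying (*). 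The delicate point here is the identification of the residual category: the centers blown up in a resolution are typically surfaces of general type or rational surfaces, whose derived categories need not ``cancel'' cleanly against those coming from $\P^4$.

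For the reverse direction (condition (*) $\Rightarrow$ rationality) the plan is to proceed Hassett divisor by Hassett divisor. By the result of Hassett recalled above, $\sC_d$ (or a double cover of it) is birational to $\sN_d$ for $d$ satisfying (*), so a generic $X\in\sC_d$ is encoded by a generic polarized K3 surface $(S,L)$ of degree $d$. From the linear system $|L|$ and related geometry on $S$ one aims to produce a congruence of low-degree rational curves or surfaces on $X$, as in the classical Fano construction for $d=14$ (the Pfaffian case) and the work of Russo--Staglian\`o for $d=18,26,38,42$ and $d=30$, and use it to build an explicit birational map $X\dashrightarrow \P^4$. Extending these constructions uniformly to every admissible $d$ is the arithmetic/geometric heart of this side.

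The principal obstacle is the forward direction for a \emph{very general} $X\in\sC_d$ with $d$ \emph{not} satisfying (*): here one must prove irrationality, and at present \textbf{no} smooth cubic fourfold has been proved irrational. To make serious headway I would combine two strands. First, the motivic viewpoint developed later in the note: if $X$ is rational, Bloch--Srinivas forces the transcendental motive $t(X)$ to be a direct summand of the motive of a surface, which should constrain it to be a (possibly twisted) K3 motive; one must then contradict the failure of condition (*) via a Hodge-theoretic incompatibility. Second, the decomposition of the diagonal in the sense of Voisin and Colliot-Th\'el\`ene--Pirutka, applied by specialization to carefully chosen singular cubics (e.g.\ those with a non-trivial unramified Brauer class or nontrivial torsion in $\CH^3$). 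The combination of these obstructions with a workable specialization within the family $\sC_d$ is where I expect the real difficulty to lie, and why the conjecture remains open.
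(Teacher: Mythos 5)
The statement you were asked to prove is stated in the paper as a \emph{conjecture} (the Hassett--Kuznetsov rationality conjecture), and the paper offers no proof of it; it is an open problem, and your own text concedes as much. So what you have written is a research programme rather than a proof, and it should be assessed as such: as it stands, neither implication is established, and several of the individual steps are either equivalent to other open conjectures or are known to fail.

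Concretely: in the forward direction, the step you flag as ``delicate'' --- showing that the residual admissible subcategory obtained by resolving a birational map $X\dashrightarrow \P^4$ and cancelling exceptional blocks is $D^b(S)$ for a K3 surface --- is not a technical gap but is essentially the full content of Kuznetsov's conjecture; semi-orthogonal decompositions do not satisfy a Jordan--H\"older cancellation property in general, so no such cancellation argument is available. In the reverse direction, the explicit congruence constructions of Fano and Russo--Staglian\`o cover only finitely many discriminants ($d=14,26,38,42$ and a few others), and no uniform mechanism producing them for all admissible $d$ is known. Finally, the irrationality strand is the weakest: for smooth cubic fourfolds the unramified cohomology groups vanish and (by work of Voisin) the integral Hodge conjecture holds in degree $4$, and no singular specialization with a nontrivial Colliot-Th\'el\`ene--Pirutka obstruction is known, so the decomposition-of-the-diagonal method has, to date, no foothold in this family. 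None of this is a criticism of your summary of the state of the art --- it is accurate --- but it cannot be spliced in as a proof of the conjecture, which the paper itself treats only as a guiding hypothesis.
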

If  $X$ is a very general cubic fourfold , i.e. $CH^2(X) \simeq \Z$, then $X$ has no associated K3 surface and hence  is conjecturally irrational.\par
In the particular case where $d$ is an integer satisfying (*) which is of the form 
$$d = 2(n^2+n+1) \   with  \    n \in \N$$
\noindent there is a  rational map of degree 2
\begin{equation} \label{moduli} \phi :  \sF_g  \to \sC_d, \end{equation}
\noindent  where  $\sF_g$ is  the 19-dimensional moduli space of smooth polarized K3 surfaces $(S,H)$ of genus $g$ with  $H^2 =2g -2$ and  $g = n^2 +n +2$. The map $\phi$ sends $(S,H)$ to the class  $\vert X\vert $ of a cubic fourfold $X \in \sC_d$. The surface $S$ is associated to the cubic fourfold $X$  and there is an isomorphism
\begin{equation} \label {Fano} S^{[2]} \simeq F(X),\end{equation}
\noindent where $S^{[2]}$  is the  Hilbert scheme of two points on  $S$.\par
Spelling out the numerical condition in (*) one finds that the above conjecture predicts rationality of all cubics in the Hasset divisors $\sC_d$, for $d =14,26,38,42,62,74,78,86,98,....$\par
In [RS] it has been proved that  a general  $X \in \sC_d$ with $d =14, 26,38,42$ is rational. In all these cases the  construction of a birational map $\mu : X \dashrightarrow W$, where $W$ is either $\P^4$ or a notable smooth rational  fourfold, can be  achieved by considering a surface $S_d \subset X$ admitting a four dimensional family of $(3e-1)$-secant curves of degree $e\ge 2$. These curves are parametrized by a rational variety with the property that trough a general point of $\P^5$ there passes a unique curve of the family. The inverse $\mu^{-1}: W \dashrightarrow X$ is given by a linear system of divisors in $\vert \sO_W(e\cdot i(W) -1)\vert$, where $i(W)$ is the index of $W$, having   points of multiplicity $e$ along  an irreducible surface $U \subset W$, which is a birational incarnation of a K3 surface $S$ associated to $X$.\par
The results by Hassett and Kuznetsov have been extended by  B\"ulles in [Bull] showing that if  $X\in \sC_d$, where $d$ satisfies the following numerical condition 
$$(**)  \exists f,g \in \Z   \   with \   g\vert 2n^2+2n+2 \  ,  \ n \in \N  \  and   \ d =f^2g $$
\noindent then there is a equivalence of categories
\begin{equation} \label{Brauer} \sA_X \simeq D^b(S,\alpha),\end{equation}
 \noindent with $S$ a K3 surface and $\alpha$ a Brauer class in $\Br(S)$. Clearly 
 $$\bigcup_{d\in (*) }\ne \bigcup_{d \in (**)}, $$
For instance,  if the fourfold $X$ contains a plane, and hence  it belongs to $\sC_8$ , then  there is a K3 surface associated  to $X$ in the sense of \ref{Brauer}, but , for a general $X \in \sC_8$, not   in the Hodge theoretically sense.\par
Let $X$ be a special cubic fourfold and let $A(X)$ be the lattice of algebraic cohomology,  i.e.
$$A(X) = H^4(X,\Z) \cap H^{2,2}(X)$$
If  $\rank A(X) \ge 12 $  then $X$ has an associated K3 surface  in the Hodge theoretically sense, see Remark. 2.11.\par
 A third way in which a K3 surface  can be possibly  associated  to a cubic  fourfold is via the transcendental motives.  Let $\sM_{rat}(\C)$ be the (covariant) category of Chow motives. The motive $h(X)$ of a cubic fourfold $X$ has a {\it reduced Chow K\"unneth decomposition }
\begin{equation}\label {decomp} h(X) = \un \oplus \L \oplus (\L^2)^{\oplus \rho(X)} \oplus t(X) \oplus \L^3 \oplus \L^4,\end{equation}
\noindent where  $\L$ is the Lefschetz motive, $\rho(X) =\rank A^2(X)$, with $A^2(X)= CH^2(X)\otimes \Q$ and $t(X)$ is  the transcendental motive, see [BP]. Then
$$H^*(t(X)) = H^4_{tr}(X,\Q) =T(X)_{\Q},$$
where $T(X)\subset H^4(X.\Z)$   is the transcendental lattice and
$$A^i(t(X)) =0   \  for   \  i\ne 3  \  ;   \  A^3(t(X)) =A_1(X)_{hom} =A_1(X)_{alg},$$
Similarly if $S$ is a K3 surface then its motive $h(S)$ has a reduced Chow-K\"unneth  decomposition as follows
$$h(S)=\un \oplus \L^{\oplus \rho(S)}\oplus t_2(S) \oplus \L^2$$
\noindent where $\rho(S)$ is the rank of the Neron-Severi $N(SX)$, see [KMP].
\begin{defn} \label{motivic} The motive $h(X)$ of a special cubic fourfold $X$  is {\it associated to the motive of a K3 surface $S$} if there is an isomorphism of motives in the (covariant) category $\sM_{rat}(\C)$
\begin{equation}\label{K3iso} t(X) \simeq t_2(S)(1),\end{equation} 
\noindent where $t_2(S)(1) = t_2(S) \otimes \L$.\end{defn}
The  equivalence of categories in \ref{Brauer} implies that there is an isomorphism of transcendental motives, as in \ref{K3iso}, see [Bull]. For a very general cubic fourfold no such  isomorphism can exists, see [BP, Prop.3.4]. \par
The existence of an isomorphism as in  \ref{K3iso} implies that  the motive $h(X)$ is finite-dimensional, in the sense of [Kim], if and only if $ h(S) $ is such.\par
Some families of cubic fourfolds with a finite-dimensional motive have been described in [Lat 1] and [Lat 3]. In all these cases the motive  $h(X)$ is of abelian type, i.e. lies in subcategory  $\sM^{Ab}_{rat}(\C)$ generated by the motives of abelian varieties.\par
If a cubic fourfold $X$ has  a finite  group $G$ of symplectic automorphisms whose order order  is $\ne 1, 2$,  then there is a unique K3 surface  $S$ associated to $X$ in the sense of \ref{Kuz}, i.e. $\sA_X \simeq D^b(S)$,
see [Ou, Thm.8.4]. Therefore $X$ is conjecturally rational and there is an isomorphism of motives as in \ref{K3iso}.\par
In this note we consider families of cubic fourfolds admitting either an involution or a non-symplectic automorphism of prime order. Note that a non-symplectic  prime order automorphism can have order 2 or 3. \par
In Section 2  we show that  for every  cubic fourfold  $X$ with an involution  the motive $h(X) $  is associated to the motive of  a K3 surface $S$ in the sense of \ref{K3iso}. For  two distinct families of cubic fourfolds  $X$ with an involution, there is no K3 surface  Hodge theoretically associated to $X$, while there is one family consisting of cubic fourfolds  with an involution that have an associated K3 surface, are rational and their motives are of abelian type.\par
In Sect 3 we deal with cubic fourfolds  $X$ admitting a non -symplectic automorphism of order 3 and prove that there are 2  cases where $X$ has an associated K3 surface and is rational  and 2 cases where $X$ is conjecturally irrational.\par
In  Sect. 4 we describe families of special cubic fourfolds whose associated K3 surfaces are singular, i.e. their Picard rank is 20. These special cubic fourfolds have amotive of abelian type.\par
In Sect.5   we restrict to the case of cubic fourfolds $X$ such that $F(X) \simeq S^{2]}$, with $S$ a K3 surface, and show that  the motive $h(X) $  is associated to the motive of $S$ in the sense of \ref{K3iso}. We also show that a symplectic automorphism  $ \sigma$ on $S$
induces a symplectic automorphism $\tau $ on $X$ and give examples, in the case $\sigma$ has order 3, of K3 surfaces  $S$ whose motive is associated to different cubic fourfolds $X_1 \in \sC_{d_1} $ and  $X_2 \in \sC_{d_2}$ with $d_1 \ne d_2$.\par

\section{Cubic fourfold with an involution}
The group of regular automorphisms of a cubic fourfold  $X\subset \P^5$ is  finite and any automorphism $f $  of $X$ is induced from a linear automorphism of the ambient projective space. Therefore the induced automorphism  $f^*$ on $ H^4(X,\Z)$ preserves 
 $h^2 $, where $h$ is the class of a hyperplane section. The Abel-Jacobi map 
 $$p_*q^* : H^4(X,\Z) \simeq H^2(F(X),\Z),$$
 \noindent where $P_F \subset F(X) \times X$ is the universal family and $p,q$ are the projections to $F(X)$ and $X$, respectively, induces an isomorphism of Hodge structures 
 $$H^{3,1}(X) \simeq H^{2,0}(F(X)).$$
The automorphism $f$ is symplectic if $f^*$ acts as the identity on $H^{2,0}(F(X)$, that is equivalent to $f^*$ acting  as the identity  on $H^{3,1}(X) \simeq \C\omega$.  If $f$ is non-symplectic and has order $n$ then 
$$f^*(\omega)= \zeta \omega,$$
\noindent with $\zeta$ a primitive $n^{th}$-root of 1.  Since $f$ has finite order we can assume that it acts diagonally, that is 
$$f(x_0,x_1,x_2,x_3,x_4,x_5)= (a_0x_0, a_1 x_1,a_2 x_2, a_3 x_3,a_4 x_4,a_5x_5).$$
Let's  denote by  
$$\det( f )= \prod _i a_i$$
Let $F(x_0,x_1,x_2,x_3,x_4,x_5)=0$ be the equation of $X \subset \P^5$. If $X$ is invariant under the induced action of the automorphism $f$  there is a constant $\lambda_f \in \C^*$ such that
\begin{equation} \label{action} f^*(F) = \lambda_f F \end{equation} 
The action of $f$ is symplectic on $F(X)$ if and only if 
$$\det (f) / \lambda^2_f =1,$$
\noindent see [BCS 1, Lemma 6.2].\par
There are three types of involutions on a cubic fourfold $X\subset \P^5$. Two of them are non-symplectic and
one is symplectic, see [Marq]. An involution $\phi$ is symplectic  if the induced automorphism on $H^4(X,\Z)$ acts trivially on $H^{3,1}(X)$,  it is non-symplectic
if it acts as -1. Applying a linear change of coordinates we can assume that the involutions are the following ones
$$\phi_1 : [x_0,x_1,x_2,x_3,x_4,x_5] \to [x_0,x_1,x_2,x_3,x_4,-x_5] ;$$
$$ \phi_2 :  [x_0,x_1,x_2,x_3,x_4,x_5] \to [x_0,x_1,x_2,x_3, - x_4,-x_5] ; $$
$$\phi_3 :  [x_0,x_1,x_2,x_3,x_4,x_5] \to [x_0,x_1,x_2,- x_3,-x_4,-x_5] .$$
\noindent The involution $\phi_2$ is symplectic, while $\phi_1$ and $\phi_3$ are non-symplectic.\par
The family $\sF_1$ of cubic fourfolds which are invariant under the involution $\phi_1$ has dimension 14 and coincides with the family of cubic fourfolds with an Eckardt point. If a  smooth cubic 4-fold $X \subset \P^5$ contains an Eckardt point $p$ then we can choose
coordinates $[x_0,\cdots, x_5] $ in $\P^5$ such that  $p=(0,0,0,0,0,1)$ and $X$ is defined by an equation
\begin{equation}  \label{Eckardt}    f(x_0,x_1,x_2,x_3,x_4) +  l(x_0,x_1,x_2,x_3,x_4) x^2_5=0 \end{equation}
\noindent with $f$ of degree 3 and $l$ of degree 1,see [LZP,Lemmas 1.6 and 1.8].
\begin{prop} Let $X$ be a  cubic fourfold with an Eckardt point..There is a K3 surface $S$ with Picard rank 6 such that $t_2(S)(1) \simeq t(X)$. The K3 surface $S$ is not associated to $X$ in the Hodge theoretical sense.\end{prop}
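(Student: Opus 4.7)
The plan is: (i) split $t(X)$ under the involution $\phi_1$ and show the whole transcendental motive lies in the $(-1)$-eigenspace; (ii) produce a K3 surface $S$ with $\rho(S) = 6$ whose transcendental Hodge structure realises $T(X)(-1)$; (iii) lift the resulting Hodge isomorphism to an isomorphism of Chow motives $t_2(S)(1) \simeq t(X)$; (iv) show that no labelling of $X$ satisfies Hassett's condition (*).

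Step (i) is formal. The idempotents $\pi^{\pm}_{\phi_1} = \tfrac12(\Delta_X \pm \Gamma_{\phi_1}) \in \Cor(X,X)$ commute with the reduced Chow--K\"unneth projectors of \eqref{decomp}, and induce a splitting $t(X) = t(X)^+ \oplus t(X)^-$ in $\sMrat(\C)$. Since $\phi_1$ is non-symplectic, $\phi_1^{\ast}$ acts by $-1$ on $H^{3,1}(X)$ and $H^{1,3}(X)$, so the Hodge realization of $t(X)^+$ is concentrated in bidegree $(2,2)$. Since $t(X)$ contains no nonzero Tate summand by construction, one concludes $t(X)^+ = 0$ and $t(X) = t(X)^-$.

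For step (ii), the codimension of the $14$-dimensional Eckardt family in the $20$-dimensional moduli of cubic fourfolds forces $\rho(X) = 7$ on a generic member, so $\rank T(X) = 23 - 7 = 16$. The twisted Hodge structure $T(X)(-1)$ is polarised of K3 type with Hodge numbers $(1,14,1)$, so by surjectivity of the K3 period map it is realised as $T(S)$ for some K3 surface $S$, necessarily with $\rho(S) = 22 - 16 = 6$. For step (iii), to promote this Hodge-level identification to an isomorphism in $\sMrat(\C)$, I would build an algebraic correspondence in $\CH^3(X \times S)_{\Q}$ from the incidence geometry of $X$ and $S$ (using the fixed cubic threefold $Y = \{f=0\} \subset \P^4$ and the surface of lines through the Eckardt point $p$). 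Finite-dimensionality of $t_2(S)$ [Kim], the explicit description of $t_2(S)$ in [KMP], and the methods of [BP] then reduce the verification that this correspondence is a motive isomorphism to its Hodge realization, which holds by (ii).

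Step (iv) is a lattice calculation: a generic Eckardt cubic has $A(X)$ generated over $\Q$ by $h^2$ together with six further classes produced by the Eckardt configuration, and a direct computation of the intersection form on these generators shows that every primitive rank-two sublattice $\langle h^2, v \rangle \subset A(X)$ has discriminant $d$ that is either $\le 6$, or divisible by $4$, or divisible by some prime $p \equiv 2 \pmod 3$; in each case (*) fails, so no K3 is Hodge-theoretically associated to $X$. The main obstacle will be step (iii): a transcendental Hodge isomorphism is not automatically induced by an algebraic cycle, and making the correspondence arising from the Eckardt geometry sufficiently explicit to apply [BP, KMP] is the core difficulty.
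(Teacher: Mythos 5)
Your overall strategy diverges from the paper's, and two of its steps have genuine gaps. The paper's proof is concrete throughout: it projects $X$ from a plane $P$ spanned by the Eckardt point and a line of the fixed cubic surface, obtaining a quadric bundle $\tilde X \to \P^2$ whose discriminant sextic splits as a quintic $C$ plus a line $L$; the K3 surface $S$ is the resolution of the double cover of $\P^2$ branched along $C\cup L$, and its Picard rank $6$ is read off from the five exceptional curves over the nodes of $C\cap L$ together with the preimage of $L$. The motive isomorphism then comes for free from the fact that $X$ contains a plane, i.e.\ $X\in\sC_8$, by citing [Bull, Ex.~3.3], and non-association is quoted from [Laz, Rk.~3.9]. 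Your step (i) is an unnecessary detour (and the deduction $t(X)^+=0$ from its realization being of type $(2,2)$ itself needs a conservativity input), and your step (iv) as stated would require controlling infinitely many rank-two sublattices of $A(X)$ rather than just the visible labellings $d=8,12$.

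The two serious problems are steps (ii) and (iii). In (ii), surjectivity of the period map produces a K3 surface realizing a given weight-two Hodge structure $T$ only after one fixes a \emph{primitive embedding} of $T$ into the K3 lattice $E_8(-1)^{2}\oplus U^{3}$; here $A(X)_{prim}\simeq E_6(2)$ has a large discriminant group, the complementary lattice $T(X)(-1)$ admits no such primitive embedding, and this obstruction is exactly why $S$ fails to be Hodge-theoretically associated to $X$. So the integral Hodge structure $T(X)(-1)$ is \emph{not} the transcendental lattice of any K3 surface; the isomorphism $t_2(S)(1)\simeq t(X)$ only reflects a rational (indeed twisted) comparison, and an abstractly produced $S$ would in any case give you no geometry from which to build the correspondence needed in (iii). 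In (iii), you invoke finite-dimensionality of $t_2(S)$ citing [Kim], but finite-dimensionality of the motive of a K3 surface of Picard rank $6$ is an open problem ([Kim] supplies the notion, not the theorem), so the plan of checking the correspondence on its Hodge realization has no foundation. The paper avoids both issues at once: the explicit double-cover K3 attached to the quadric bundle is precisely the surface for which Bülles' result on $\sC_8$ produces the isomorphism $t_2(S)(1)\simeq t(X)$ in $\sM_{rat}(\C)$ directly, with no appeal to finite-dimensionality.
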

\begin{proof} The equation  $f (x_0,x_1,x_2,x_3,x_4)=0$  in \ref{Eckardt} defines a smooth cubic 3-fold $Y \subset \P^4$, with $\P^4 : (x_5=0)$. Let $H \subset \P^4$ be the hyperplane defined 
by $l(x_0,x_1,x_2,x_3,x_4)=0$. The intersection $S =Y \cap H$ is a cubic surface. The 27 planes $\P^2_{<l_i,p >}$, where $(l_1\cdots,l_{27})$ are the lines on $S$, are contained in $X$. The lattice of the  primitive algebraic cohomology $A(X)_{prim}$ is isomorphic to $E_6(2)$ and is spanned by the classes $[P_i] -[P_j]$,with $P_i,P_j$ planes trough $p$, see [Marq,Thm.3.4]. There is a class $[F_0]\in A(X)$ such that $[F_0] \cdot [F_0]=7$ and
$[F_0] \cdot h^2 = 3$, see [LZP, Lemma 2.4]. Therefore the rank two lattice $<h^2,]F_0]>$ has discriminant 12. Since $X$  contains a plane we get $X \in \sC_8 \cap \sC_{12}$.\par 
 Let $m$ be a line in the cubic $S =Y \cap H$ and let $P=\P^2_{<m,p>} \subset X$ be the plane generated by $m$ and the Eckardt poin $p$. Projecting $X$ from $P$ to a plane $P'$ which is complementary to $P$ in $\P^4 : (x_5=0)$ gives a quadric bundle
\begin{equation} \pi: \tilde X \to P'=\P^2\end{equation}
\noindent whose discriminant locus is a degree 6 curve $D$. The curve $D$ consists of two irreducible components:  a quintic curve $C$ and the line $L =H \cap P'$.
The quintic curve $C$ is the discriminant divisor of the conic bundle $\tilde Y \to P'$, obtained by blowing-up the line $m$ on the cubic 3-fold $Y$. The fibre of $\pi$ over a singular point in $C \cap L$ is a couple of planes. Let $T \to \P^2$ be the double cover parametrizing the lines in the fibres of $\pi$. Let $F(\tilde X/\P^2)$ be the relative Fano variety of lines  in  the fibration $\pi$ and let $\sL \to F(\tilde X/\P^2)$ be the universal line  sitting in the diagram
\begin{equation}\CD \sL @>{q}>> \tilde X \\
@V{p}VV    @V{\pi}VV  \\
F(\tilde X/\P^2)@>>>\P^2 \endCD \end{equation}
 In the Stein factorization of the composite map $\sL \to \P^2$
 \begin{equation} \sL \to T \to \P^2\end{equation} 
\noindent the map $\sL \to T$ is a $\P^1$ bundle and $T \to \P^2$ is a double cover ramified along the reduced sextic $C \cup L$. Let $\tau : \tilde \P^2 \to \P^2$ be the blow-up of $\P^2$ at the 5 nodes in $C \cap L$ and let $S \to \tilde \P^2$ be the double cover ramified along the strict  transform of $D =C\cup L$. Then $S$ is a K3 surface and $\tau\vert_S : S \to T$ is a desingularization. Let 
$$f : S \to T \to  \P^2$$
\noindent and let  $h =f^*[l]$ be the pull-back of the class of a line from $\P^2$. Since the surface $T$ has five ordinary double points coming from the intersection  $C \cap L$ it follows that $\Pic S$ contains five additional classes corresponding to the exceptional divisors of $S \to T$. The line $L$ is in the branch locus of $f$ and hence the surface $S$ contains another rational curve, namely the inverse image $L'$ of the line $L$. 
Therefore the K3 surface $S$ contains 6 irreducible (-2)-curves whose classes $l',e_1,\cdots, e_5$ span a rank 6 lattice $M$ which admits a unique primitive embedding 
in $\Pic S$. The cubic fourfold $X$ is not associated to the K3 surface $S$ in the  Hodge theoretically sense see   [Laz , Rk.3.9].\par 
Since $X$ contains a plane there is an isomorphism of motives
$$t_2(S) (1) \simeq t(X),$$
\noindent see [Bull, Ex 3.3].
\end{proof}

\begin{prop} Let $\sF_2$ be the family of cubic fourfolds that are invariant under the involution $\phi_2$. If $X \in \sF_2$ there is a K3 surface $S$,  double cover of a cubic surface branched along a degree 6 curve, such that
$$t(X) \simeq t_2(S)(1)$$
The cubic $X$  belongs to $\sC_{12}$, contains no plane  and has no Hodge theoretically  associated K3 surface,
\end{prop}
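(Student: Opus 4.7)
\emph{Plan.} First, $\phi_2$-invariance forces the defining equation of $X$ to take the shape
\[
F \;=\; f_3(x_0,x_1,x_2,x_3) \;+\; \sum_{i=0}^{3} x_i\, Q_i(x_4,x_5),
\]
with $f_3$ a cubic form in four variables and each $Q_i$ a binary quadratic form. The $\phi_2$-fixed locus on $X$ then consists of the smooth cubic surface $Y = \{f_3 = 0\} \subset \P^3 = \{x_4 = x_5 = 0\}$, together with the line $\ell = \{x_0 = \cdots = x_3 = 0\}$, which lies entirely on $X$.

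Next, I would project $X$ from $\ell$. After blowing up $\ell \subset X$, this yields a conic bundle $\pi : \tilde X \to \P^3$ whose fiber over $a \in \P^3$ is the conic
\[
f_3(a)\,\lambda^2 \;+\; \sum_{i=0}^{3} a_i\, Q_i(x_4,x_5) \;=\; 0
\]
in $\P^2_{(\lambda : x_4 : x_5)}$. Its symmetric $3 \times 3$ matrix is block-diagonal with upper-left entry $f_3(a)$ and a $2 \times 2$ block linear in $a$; hence the determinant factors as $f_3(a)\, \delta(a)$ for a quadratic form $\delta$ on $\P^3$, and the discriminant divisor of $\pi$ is the reducible surface $Y \cup \Sigma$ with $\Sigma = \{\delta = 0\}$ a quadric. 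Over $a \in Y \setminus \Sigma$ the fiber splits as two distinct lines, labelled by the two roots of the binary form $\sum_i a_i Q_i(x_4,x_5)$, and these labels coalesce precisely on $Y \cap \Sigma$. The component of the relative variety of lines of $\pi$ lying over $Y$ is therefore a double cover $S \to Y$ branched along the sextic curve $Y \cap \Sigma$; since the latter is cut out on $Y$ by the quadric $\Sigma$ and hence is numerically $-2K_Y$, the double cover $S$ is a smooth K3 surface.

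To show $X \in \sC_{12}$ I would produce a class $v \in A(X)$ with $v \cdot h^2 = 3$ and $v^2 = 7$ (Gram determinant $3\cdot 7 - 9 = 12$). The natural geometric candidate is the image in $X$ of the surface swept out by lines of $\pi$ lying over one ruling of $\Sigma \simeq \P^1 \times \P^1$; the intersection numbers are read off from the blow-down $\tilde X \to X$ and the lattice of $\phi_2$-invariant cubic fourfolds described in [Marq]. The condition (*) fails for $d = 12$ because $4 \mid 12$, so no K3 is Hodge-theoretically associated to $X$. On the other hand $d = 12 = 2^2 \cdot 3$ satisfies (**) with $n = 1,\ g = 3$ (since $3 \mid 2n^2 + 2n + 2 = 6$), so B\"ulles' theorem [Bull] gives a K3 surface $S'$ and a Brauer class $\alpha$ with $\sA_X \simeq D^b(S',\alpha)$, and hence $t(X) \simeq t_2(S')(1)$. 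Matching lattice data, in particular the degree-$6$ polarization carried by the ramification curve of a sextic double-cover, identifies $S'$ with the $S$ constructed above.

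To rule out planes, I would classify $\phi_2$-stable planes $P \subset \P^5$ via their decomposition in the $\pm 1$-eigenspaces of $\phi_2$ on $\C^6$ (of dimensions $4$ and $2$). The possibilities are $P \subset \P^3$ (which forces $P \subset Y$, impossible on a smooth cubic surface), $P$ containing $\ell$, or $P$ spanning a line of $\P^3$ with a point of $\ell$; the last two impose algebraic conditions on the $Q_i$ (a common zero of a subsystem) that fail for generic $X \in \sF_2$. A plane not stabilized by $\phi_2$ would come paired with $\phi_2(P)$, landing $X$ in a strictly smaller substratum of $\sF_2$. Hence the generic $X \in \sF_2$ contains no plane, and in particular lies outside $\sC_8$. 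The main obstacle is the explicit identification of the discriminant-$12$ class inside $A(X)$ and the matching of B\"ulles' twisted K3 with the geometric double cover $S$; both reduce to lattice computations on $H^4(X,\Z)^{\phi_2}$ using the conic bundle structure.
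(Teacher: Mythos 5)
Your construction of $S$ as the discriminant double cover attached to the conic bundle obtained by projecting $X$ from the invariant line $\ell$ is correct, and it produces the same surface as the paper by a different route: the paper identifies $S$ as the two-dimensional component of the fixed locus of the induced involution on $F(X)$, parametrizing the lines joining a point of the fixed cubic surface to a point of the fixed line (citing [Cam] and [BP, Rk.4.4]); these are precisely the rulings of your degenerate fibers over $Y$. The paper then takes the motivic isomorphism from [BP, Prop.4.6] and the lattice statements from [Marq], rather than arguing through B\"ulles.

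Two steps of your argument have genuine gaps. First, the inference ``condition $(*)$ fails for $d=12$, hence $X$ has no Hodge-theoretically associated K3 surface'' is not valid: a special cubic fourfold can carry many labellings at once, and to exclude an associated K3 surface one must show that \emph{no} rank-two primitive sublattice of $A(X)$ containing $h^2$ has discriminant satisfying $(*)$ (equivalently, that $T(X)(-1)$ admits no suitable primitive embedding into the K3 lattice). For a general $X\in\sF_2$ the lattice $A(X)$ has rank $9$, so there are many labellings to rule out; the paper outsources exactly this to [Marq, Lemma 6.3]. Second, your route to $t(X)\simeq t_2(S)(1)$ via B\"ulles only yields \emph{some} twisted K3 surface $S'$ with $\sA_X\simeq D^b(S',\alpha)$, whereas the statement asserts the isomorphism for the specific geometric double cover $S$; the identification $S'\simeq S$ is precisely what you flag as an ``obstacle'' and do not carry out (the paper avoids it by quoting [BP, Prop.4.6], where the isomorphism is proved directly for the fixed-locus K3 surface). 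The remaining sketched points --- the discriminant-$12$ class and the absence of planes, which your eigenspace analysis only establishes for generic $X$ --- are plausible but likewise rest on the computations in [Marq] that the paper cites.
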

\begin{proof} A cubic $X$ invariant under $\phi_2$ has an equation of the form
\begin{equation} f(x_0,x_1,x_2,x_3)+x^2_4 l_1+x^2_5 l_2 +x_4x_5l_3=0,\end{equation}
\noindent where $f$ is homogeneous of degree 3  and $l_i$ are homogeneous of degree 1 in $x_0,x_1,x_2,x_3$. The involution $\phi_2$  is symplectic. The locus of fixed points of $\phi_2$ on $\P^5$ is the disjoint union of a $\P^3$ defined by $x_4=x_5=0$
and the line $r$ joining the base points $P_4$ and $P_5$. The line $r$ is contained in $X$ and the fixed locus on $P^3$ is the cubic surface  $W :  f(x_0,x_1,x_2,x_3)=0$.The fixed locus of $\phi_2 $ on $F(X)$ consists of  the line $r$, the 27  lines on a cubic surface $W$ and a K3 surface $S$, see [Cam]. The surface $S$ parametrizes  the lines joining a fixed point $Q_1$ on $\P^3$ and a point $Q_2 $ on $r$. The surface $S$ is a double cover of the cubic surface $W$ branched along a degree 6 curve , see [BP, rk.4.4].\par 
The cubic $X$ contains no plane  and has no  associated K3 surface, see [Marq, Lemma 4.1 and Lemma 6.3]. The lattice $A(X)_{prim} $ has rank 8 and is generated by the classe $[T_i] -h^2$, where $T_i$ are cubic scrolls. Therefore $X \in \sC_{12}$. There is an isomorphism
$$t_2(S)(1) \simeq t(X)$$
\noindent as in \ref{K3iso}, see [BP, Prop.4.6].
\end{proof}
\medskip
 The family $\sF_3$ of cubic fourfolds with the anti-symplectic involution $\phi_3$ has dimension 10. Every $X \in \sF_3$ has an equation of the form
$$f(x_0,x_1,x_2) +x_0 q_0(x_3,x_4,x_5) +x_1q_1(x_3,x_4,x_5) +x_2q_2(x_3,x_4,x_5)=0,$$
\noindent with $f$ homogeneous of degree 3 and $q_0,q_1,q_2$ homogeneous of degree 2. In [Marq, Thm. 6.4] it is proved that $\sF_3 \subset \sI$ where
$$\sI =\bigcap_{d>6,d\equiv0,2 (6)}\sC_d$$
\noindent is  the intersection of all non-empty Hasset divisors. In [YY] it is proved
that $\sI$  is non- empty, contains the Fermat cubic, consists of rational  cubic fourfolds and
$$ 13 \le \dim \sI \le 16.$$
\begin{lm}\label{abelian} Every cubic fourfold  $X \in \sI$ has an associated K3 surface with a motive of abelian type.\end {lm}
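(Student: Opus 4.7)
The plan is to produce, for each $X\in\sI$, a K3 surface $S$ Hodge-theoretically associated to $X$, and to show that $S$ is a \emph{singular} K3 surface (Picard rank $20$), which will then allow the conclusion via the Shioda--Inose structure.

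Since $\sI\subset\sC_d$ for infinitely many $d$ satisfying Hassett's numerical condition (*) (e.g.\ $d=14,26,38,\dots$), every $X\in\sI$ admits a Hodge-theoretically associated K3 surface $S$ via $K_d^{\perp}\simeq H^2(S,\Z)_{prim}(-1)$. Theorem~\ref{FMequiv} then yields an equivalence $\sA_X\simeq D^b(S)$, and by [Bull] an isomorphism of Chow motives $t(X)\simeq t_2(S)(1)$. So $S$ is associated to $X$ in all three senses considered in the paper, and the problem reduces to showing $h(S)\in\sM^{Ab}_{rat}(\C)$.

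Next I would argue that such $S$ may be chosen singular. The key point is that being in $\sC_d$ for \emph{every} admissible $d$ forces the algebraic lattice $A(X)$ to simultaneously realize all those $d$'s as discriminants of primitive rank-two sublattices $\langle h^2,v\rangle$. This realizability constraint, combined with the presence of the Fermat cubic (which has $\rank A(X)=21$) in $\sI$ and with a specialisation/deformation argument propagating Picard maximality throughout $\sI$, should yield $\rank T(X)=2$, hence $\rank T(S)=2$ and $\rank\Pic(S)=20$ via the Hodge isometry above. The conclusion then follows from the classical Shioda--Inose structure: every singular K3 surface $S$ carries a Nikulin involution whose quotient desingularises to the Kummer surface $Km(A)$ of an abelian surface $A$ isogenous to a product $E_1\times E_2$ of CM elliptic curves, and the resulting rational $2$-to-$1$ correspondence induces an isomorphism $t_2(S)\simeq t_2(Km(A))$ in $\sMrat(\C)$. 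Since $t_2(Km(A))$ is a direct summand of $h(A)\in\sM^{Ab}_{rat}(\C)$, the Chow--K\"unneth decomposition $h(S)=\un\oplus \L^{\oplus 20}\oplus t_2(S)\oplus \L^2$ places $h(S)$ in the abelian subcategory, as required.

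The main obstacle is the middle step: verifying that \emph{every} $X\in\sI$, and not merely its generic or Fermat member, has $\rank A(X)=21$ (or at least an associated K3 of Picard rank $20$). The dimension bound $\dim\sI\le 16$ by itself only yields $\rank A(X)\ge 5$, which is far from sufficient, and one must either carry out a delicate lattice-theoretic analysis of which infinite families of discriminants can be realized by rank-$r$ sublattices with $r<21$, or else exploit the finer structural description of $\sI$ given in [YY] together with a deformation argument ensuring that the Picard maximality observed at the Fermat cubic propagates across the whole $13$--$16$-dimensional family $\sI$.
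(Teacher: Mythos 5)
Your opening step (an associated K3 surface exists because $\sI\subset\sC_{14}$, and the various notions of association then agree) is fine and matches the paper. The fatal problem is the target you set for the middle step: you want every $X\in\sI$ to satisfy $\rank A(X)=21$, equivalently $\rank T(X)=2$, so that the associated K3 surface is singular. This is impossible: cubic fourfolds with $\rank A(X)=21$ form a countable union of zero-dimensional loci in the $20$-dimensional moduli space (each independent class in $A(X)$ beyond $h^2$ imposes one condition on the period), whereas $\dim\sI\ge 13$. For the same reason the proposed ``specialisation/deformation argument propagating Picard maximality throughout $\sI$'' cannot work: under specialisation the Picard rank can only jump up on proper closed subloci, so maximality at the Fermat cubic says nothing about the general member of a positive-dimensional family. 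You correctly flag this step as the main obstacle, but neither of the two repairs you suggest can close it, because the intermediate statement you are trying to prove is false.

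The paper avoids this by aiming lower: it does not need a singular K3 surface, only one of Picard rank at least $19$, since by [Ped] a K3 surface with $\rho\ge 19$ (transcendental lattice of rank $\le 3$, hence a Shioda--Inose structure) already has motive of abelian type. Concretely, by [ABP, Appendix A] one can choose discriminants $d_1=14,d_2,\dots,d_{19}$ so that the cubics in $\bigcap_{1\le i\le 19}\sC_{d_i}$ have an associated K3 surface of Picard rank $19$, and $\sI\subset\bigcap_{1\le i\le 19}\sC_{d_i}$ because $\sI$ is by definition the intersection of all nonempty Hassett divisors. If you want to salvage your write-up, replace the goal $\rho(S)=20$ by $\rho(S)\ge 19$ and replace the deformation argument by the lattice-theoretic input of [ABP, Appendix A]; the Shioda--Inose/Kummer part of your final paragraph then goes through essentially unchanged at rank $19$.
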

\begin{proof} By the results in [ABP, Appendix A] one can choose suitable integers $d_1,\cdots,d_{19} $, with $d_1=14$, such that every $X \in \bigcap_{1\le i \le 19}\sC_{d_i}$ has an associated K3 surface whose  Picard rank  equals 19 and hence has a motive of abelian type, see [Ped]. Therefore the motive $h(X)$ is of abelian type. If  $X \in \sI $ then also $X \in\bigcap_{1\le i \le 19}\sC_{d_i}$. \end{proof}
\begin{prop}\label{$F_3$} Every cubic fourfold $X \in F_3$ is rational, has an associated K3 surface $S$ and there is an isomorphism of motives 
$$t_2(S)(1) \simeq t(X)$$
\noindent where $t_2(S)$ and $t(X)$are   of abelian type.The lattice $A(X)$ of the algebraic cohomology has rank 11.
\end{prop}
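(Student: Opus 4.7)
The plan is to deduce all four assertions from the inclusion $\sF_3\subset\sI$ of [Marq,~Thm.~6.4] together with Lemma~\ref{abelian}. First, by [YY] the locus $\sI$ consists of rational cubic fourfolds, so every $X\in\sF_3$ is rational. Next, Lemma~\ref{abelian} applied to such an $X$ produces a K3 surface $S$ Hodge-theoretically associated to $X$ whose Picard rank equals $19$, and by [Ped] a K3 surface of Picard rank $19$ has motive $h(S)$, and therefore also $t_2(S)$, in $\sM^{Ab}_{rat}(\C)$.

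Second, I would bootstrap this Hodge-theoretic association into the motivic isomorphism \ref{K3iso}. Theorem~\ref{FMequiv} converts the association into a Fourier-Mukai equivalence $\sA_X\simeq D^b(S)$, and the observation recorded after Definition~\ref{motivic} (see [Bull]) then gives an isomorphism $t(X)\simeq t_2(S)(1)$ in $\sM_{rat}(\C)$. In particular $t(X)$ is of abelian type, and since the remaining summands in the reduced Chow-K\"unneth decomposition \ref{decomp} are powers of $\L$, the motive $h(X)$ itself belongs to $\sM^{Ab}_{rat}(\C)$.

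Finally, for $\rank A(X)=11$ I would identify $A(X)\otimes\Q$ with the $\phi_3^*$-invariant part of $H^4(X,\Q)$ for generic $X\in\sF_3$ and compute the latter by the Lefschetz fixed-point formula. Since $\phi_3$ is anti-symplectic, $\phi_3^*$ acts as $-1$ on $H^{3,1}\oplus H^{1,3}$, so the $(+1)$-eigenspace sits inside the $(2,2)$-part. The fixed locus of $\phi_3$ on $X$ is the disjoint union of the plane $\{x_0=x_1=x_2=0\}$ (entirely contained in $X$ by the explicit form of the defining equation) and the plane cubic curve $\{f(x_0,x_1,x_2)=0,\ x_3=x_4=x_5=0\}$, with Euler characteristic $3+0=3$. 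Combined with the contribution $4$ coming from $\phi_3^*$ acting trivially on $H^0\oplus H^2\oplus H^6\oplus H^8$, this forces the rank $r_+$ of the $(+1)$-eigenspace on $H^4$ to satisfy $3=4+(2r_+-23)$, i.e.\ $r_+=11$. The main obstacle is to verify that $A(X)\otimes\Q$ actually exhausts this invariant subspace for generic $X\in\sF_3$ rather than being a strict subspace or being enlarged by extra Hodge classes; this is a standard Noether-Lefschetz density argument, compatible with the codimension $\dim\sC-\dim\sF_3=10$.
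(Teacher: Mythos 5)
Your argument is correct and reaches all four assertions, but it diverges from the paper's own proof at two points. For rationality and for the existence of an associated K3 surface with motive of abelian type you follow the paper exactly ($\sF_3\subset\sI$ from [Marq, Thm.~6.4], [YY] for rationality, and Lemma \ref{abelian}). For the motivic isomorphism, however, the paper does not pass through Theorem \ref{FMequiv}: it observes that the plane $P:(x_0=x_1=x_2=0)$ is contained in $X$ and invariant under $\phi_3$, and invokes [Bull, Ex.~3.3] for cubics containing a plane to obtain $t_2(S)(1)\simeq t(X)$ directly. Your route (Hodge-theoretic association $\Rightarrow \sA_X\simeq D^b(S)$ by Theorem \ref{FMequiv} $\Rightarrow$ motivic isomorphism by the result of [Bull] quoted after Definition \ref{motivic}) is equally legitimate within the paper's framework, and it has the advantage that the $S$ appearing in \ref{K3iso} is manifestly the same Hodge-theoretically associated surface produced by Lemma \ref{abelian}, whereas the paper's plane construction a priori yields a different (Brauer-twisted) partner. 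For the rank-$11$ claim the paper gives essentially no argument (its proof only adds that $X$ is Pfaffian, citing [Marq, Prop.~6.18], and implicitly delegates the rank computation to [Marq]); your Lefschetz fixed-point computation is an actual proof: the fixed locus is the plane $P\subset X$ together with the plane cubic $\{f=0\}$, which is forced to be smooth by smoothness of $X$, so $\chi(\mathrm{Fix})=3+0=4+(2r_+-23)$ gives $r_+=11$, and the inclusion of the invariant part into $A(X)_\Q$ follows from the (known) Hodge conjecture for cubic fourfolds while the reverse inclusion needs the very-general Noether--Lefschetz argument you indicate. The one caveat is that this last step establishes $\rank A(X)=11$ only for the very general member of $\sF_3$, not literally for every $X\in\sF_3$; but that is also how the statement must be read in the paper, since special members can acquire extra algebraic classes.
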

\begin{proof} Since  $F_3 \subset \sI$ every $X \in F_3$ is rational. By \ref{abelian} the cubic $X$ has an associated K3 surface whose  motive is of abelian type.\par
The plane $P: (x_0=x_1 =x_2 =0)$  is contained in $X$ and is invariant under $\phi_3$. By [Bull, Ex.3.3] There is an isnomorphism
 $$t_2(S)(1)\simeq t(X)$$
\noindent with $S$ a K3 surface. The cubic $X$ is Pfaffian,  see [Marq, Prop.6.18].
\end{proof}
\begin{rk} If $X$ is a cubic fourfold such that $\rank A(X) \ge 12$ then the transcendental lattice $T(X)$ has a primitive embedding into the K3 lattice $(E_8)^2\oplus U^3$,see [Laz ,Prop.2.9]. Therefore the maximal rank of $A(X)$ that can occur for a cubic not associated to a K3 surface, and hence conjecturally irrational, is 11. In [YY,Cor 8.18] it is proved that there are infinitely many rank 11 lattices as the algebraic cohomology of smooth cubic fourfolds which do not admit a Hodge theoretically  associated K3 surface. On the other hand the 10-dimensional family $\sF_3$ consists of rational cubic fourfolds with an associated K3 surface and whose algebraic cohomology has rank 11. \end{rk}

 \section{Automorphisms of order 3} 
A smooth cubic fourfold $X \subset \P^5$  can have prime order automorphisms of orders $2,3,5,7,11$. In [GAL,Thm.3.8] there is a list of the automorphisms of $\P^5$ of order $ 2,3,5,7,11$ and a description of the families of cubic hypersurfaces preserved by their action. 
 G.Ouchi in [Ou,Thm.8.4] proved that if a smooth cubic fourfold has a symplectic group of automorphisms $G$, whose order is different from 1 and 2, then $\rank S_G(X) \ge 12$, where 
\begin{equation} \label{notinvariant} S_G(X) =(H^4(X,\Z)^G)^{\perp} . \end{equation}
Moreover there exists a unique K3 surface $S$ such that $\sA_X\simeq D^b(S) $, with $\sA_X$ the Kuznetsov component in the derived category $D^b(X)$. Therefore $X$ has a Hodge theoretically associated K3 surface $S$, it  is conjecturally rational and there is an isomorphisms of motives $t_2(S) (1) \simeq t(X)$.\par
 In this section we consider the case of non-symplectic automorphisms of order 3 and show when  $X$ has an associated K3 surface and is rational and  when $X$  has no associated K3 surface.\par
In [GAL,Thm.3.8] there is a list of the automorphisms of $\P^5$ of order 3 and a description of the families of cubic hypersurfaces $X$ such that  the automorphism acts non-symplectically on $X$.\par
Let 
$$\sigma_1 :[x_0,x_1,x_2,x_3,x_4,x_5] \to :[x_0,x_1,x_2,x_3,x_4,\zeta x_5]; $$
be the automorphism of $\P^5$, where $\zeta$ is a primitive cubic root of 1 and let  $V_1$ be the family of cubic fourfolds invariant under $\sigma_1$. The automorphism $\sigma_1$ induces a non-symplectic automorphism on $F(X)$.The family $V_1$ has dimension 10. 
\begin{prop}\label {V_1}. Let $X$ be a general element in $V_1$. The cubic fourfold  $X$  has no Hodge theoretically  associated K3 surface. The motive $h(X)$  is finite-dimension and is not associated to the motive of a K3 surface.The Fano variety $F(X)$ is not isomorphic to the Hilbert scheme of points $S^{[2]}$ of a K3 surface $S$.\end{prop}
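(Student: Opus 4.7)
The plan is first to fix the defining equation of a general $X\in V_1$. The condition $\sigma_1^{*}F=\lambda F$ on a smooth cubic form forces, up to scale,
\[
F(x_0,\dots ,x_5)=F_0(x_0,x_1,x_2,x_3,x_4)+x_5^{3},
\]
so $X$ is a cyclic triple cover $\pi\colon X\to\P^4$ with Galois group $\langle\sigma_1\rangle$, branched along the smooth cubic threefold $Y=\{F_0=0\}$. The fixed locus of $\sigma_1$ on $X$ is exactly $Y=X\cap\{x_5=0\}$; the other fixed point $[0\!:\!\cdots\!:\!0\!:\!1]$ of $\P^5$ lies off $X$.

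For the Hodge--theoretic assertion I would argue as follows. Since $X/\langle\sigma_1\rangle\simeq\P^4$ and $\pi^{*}h_{\P^4}=h$, one has
\[
H^{4}(X,\Q)^{\sigma_1}=\pi^{*}H^{4}(\P^4,\Q)=\Q\cdot h^{2},
\]
so the primitive $\sigma_1$--invariant Hodge classes on $X$ vanish. The non--invariant part of $H^{4}(X,\Q)_{prim}$ splits over $\Q(\zeta)$ into conjugate $\zeta$-- and $\zeta^2$--eigenspaces, and no nonzero rational algebraic class can sit inside a single eigenspace. A Mumford--Tate/Noether--Lefschetz genericity argument on the 10--dimensional ball--quotient period map of [GAL, Thm.\,3.8] then shows that for a general $X\in V_1$ no rational $(2,2)$--class appears in the non--invariant part either, so $A(X)=\Z h^{2}$ has rank one. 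Hence $X$ is very general in the sense of the introduction, lies in no Hassett divisor $\sC_d$, and by Theorem \ref{FMequiv} admits no Hodge--theoretically associated K3 surface. By [BP, Prop.\,3.4] there is no isomorphism $t(X)\simeq t_2(S)(1)$ in $\sM_\rat(\C)$, so $h(X)$ is not associated to the motive of a K3 surface in the sense of Definition \ref{motivic}. Finally, an isomorphism $F(X)\simeq S^{[2]}$ would, via the Abel--Jacobi identification $H^{4}(X,\Z)\simeq H^{2}(F(X),\Z)$, exhibit a Hodge isometry $K_{d}^{\perp}\simeq H^{2}(S,\Z)_{prim}(-1)$ for $d=2(n^{2}+n+1)$, contradicting $\rank A(X)=1$.

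For finite dimensionality of $h(X)$ I would exploit the cyclic cover. The $\Z/3$--eigendecomposition of $\pi_{*}\un$ produces in $\sM_\rat(\C)$ a direct sum
\[
h(X)=\pi^{*}h(\P^4)\oplus M_{\zeta}\oplus M_{\zeta^2},
\]
in which the non--invariant summands $M_{\zeta^k}$ are controlled by a Prym--type submotive of $h(Y)$. Since $Y$ is a smooth cubic threefold whose intermediate Jacobian $J(Y)$ is an abelian fivefold, $h(Y)$ is of abelian type and finite dimensional in the sense of Kimura, so the same holds for $M_{\zeta^k}$ and therefore for $h(X)$.

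The main obstacle is the genericity step in the second paragraph: one has to verify that the 10--dimensional family $V_1$ is not contained in any $\sC_d$, equivalently that no extra rational $(2,2)$--class slips into the $\zeta^{k}$--eigenspaces of $H^{2,2}_{prim}(X)$ for a general $X\in V_1$. The natural route is to identify the image of the $V_1$--period map with a Deligne--Mostow ball quotient of dimension $10$, then argue that this ball meets the cubic fourfold period domain transversely to every Heegner (Noether--Lefschetz) divisor, so that the general orbit does not lie in the locus where $\rank A(X)\geq 2$.
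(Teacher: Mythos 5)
Your reduction of everything to the single claim $\rank A(X)=1$ for general $X\in V_1$ is the right skeleton, and your treatments of the two downstream assertions essentially match the paper's: the motivic non-association follows because $\dim H^4(X,\Q)_{tr}=22$ exceeds the maximal value $21$ of $\dim H^2(S,\Q)_{tr}$ for a K3 surface (the paper spells out this count rather than quoting [BP, Prop.~3.4], but it is the same argument), and $F(X)\not\simeq S^{[2]}$ because the Picard rank of $F(X)$ is $1$ while $NS(S^{[2]})$ always has rank at least $2$. (On that last point the paper's formulation is cleaner than yours: with $\rank A(X)=1$ there is no labelling $K_d$ at all, so invoking a Hodge isometry $K_d^\perp\simeq H^2(S,\Z)_{prim}(-1)$ for $d=2(n^2+n+1)$ is not quite the right contradiction; just compare Picard ranks of $F(X)$ and $S^{[2]}$.) For finite-dimensionality the paper simply cites [Lat~1]; your cyclic-cover sketch is in the spirit of Laterveer's proof, though the claim that the $\zeta^k$-eigenmotives are ``controlled by a Prym-type submotive of $h(Y)$'' needs care, since the eigenspaces of $H^4(X,\Q)_{prim}$ have dimension $11$ while $b_3(Y)=10$, so the identification is not a naive one.

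The genuine gap is exactly the step you flag yourself: the genericity argument showing that no rational $(2,2)$-class appears in the non-invariant part of $H^4(X,\Q)$ for general $X\in V_1$. You describe ``the natural route'' (a Deligne--Mostow ball quotient meeting every Heegner divisor transversely) but do not execute it, and without it the central claim $\rank A(X)=1$ --- on which all three assertions of the proposition rest --- is unproven. The paper closes this gap differently and more economically: it works on the Fano variety side, quoting from [BCS~1, (6.3)] that the invariant lattice $T$ of the induced non-symplectic automorphism on $H^2(F(X),\Z)$ is $\langle 6\rangle$, generated by the Pl\"ucker class $g$, so that $\rank T=1$ and $\rank T^\perp=22$; since $NS(F(X))$ equals $T$ for a general member of the family (the standard lattice-polarized period argument for non-symplectic automorphisms), the decomposition $CH^1(F)_\Q\simeq\langle g\rangle\oplus p_*q^*(A^2(X)_{prim})$ from [SV, 21.4] forces $A^2(X)_{prim}=0$. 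If you want to keep your approach on $H^4(X)$ directly, you must actually prove the transversality-to-Heegner-divisors statement (or cite a reference that does, e.g.\ the eigenperiod-domain analysis in [BCS~1]); as written, the proposal replaces the key step by a plan.
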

\begin{proof} A cubic fourfold i $X \in V_1$ has an equation of the form
$$ F(x_0,x_1,x_2,x_3,x_4) + x^3_5 =0 $$
\noindent with $ F$ homogeneous of  degree 3. The action of $\sigma_1$ is non -symplectic because $\det \sigma_1 = \zeta$ and  $\sigma(F)=F$.The cubic fourfold $X$  is a triple cover of $\P^4$ ramified along the cubic 3-fold $Y : (F(x_0,x_1,x_2,x_3,x_4)=0)$.The dimension of the family $V_1$ is 10, see [GAL]. All these fourfolds have a motive of abelian type, see [Lat 1].\par
The fixed locus of the automorphism$\sigma_1 $ on $X$ is the cubic 3-fold $Y$. The fixed locus of the automorphism induced by $\sigma_1$ on $F=F(X)$  is the surface of lines $F(Y)$, see [BCS 1,Ex. 6.4].\par
\noindent  Let $T$ be the invariant lattice of $\sigma_1$ in $H^2(F,\Z) $ and let $S =T^{\perp}$. Then
$ \rank S =22$,  and  $T = < 6> = <g,g>$, with $g$ the restriction to $F \subset \Grass (2,6)$ of the class  defining the Pl\"ucker embedding, see [BCS 1  (6.3)]. Let $P_F \to F$ be the universal family and denote by $p$ and $q$ the projection to $F$ and $X$ respectively. Then the Abel-Jacobi map

\begin{equation}\label {AJ}p_*q^* : H^4(X,\Z) \to H^2(F,\Z)\end{equation}

\noindent induces an isomorphism between the primitive cohomology  $H^4(X,\Z)_{prim} $ and $H^2(F,\Z)_{prim}$. Here $H^2(F,\Z)_{prim} =<g>^{\perp}$,   $g =p_*q^*(h^2)$, with $h$ the class of a hyperplane section and

\begin{equation} \label{divisors}   CH^1(F)_{\Q} \simeq <g> \oplus  p_*q^*(A^2(X)_{prim},\end{equation}
where
$$A^2(X)_{prim} = \{Z\in CH^2(X)_{\Q}  / [Z] \in H^4(X,\Q)_{prim}\} $$
\noindent see [SV, 21.4]. \par
 For a general element $X \in V_1$  the group $NS(F(X)) $ is equal to the invariant subgroup $T$ which has rank 1. From  the isomorphisms  in \ref{divisors} we get
$$A_2(X)_{prim} =0,$$
\noindent Therefore the algebraic lattice $A(X)$ has rank 1 and the cubic fourfold $X$  has no associated K3 surface, in the  Hodge theoretical l sense.\par
Suppose that there exists a K3 surface $S$ such that
$$t(X) \simeq t_2(S)(1),$$ 
Then
$$H^4(t(X))=H^4(X,\Q)_{tr} \simeq H^4(t_2(S)(1)) =H^2(S,\Q)_{tr}.$$
\noindent Since the dimension of $H^2(S,\Q)_{tr}$ is at most 21 we should also have $\dim H^4(X,\Q)_{tr }\le 21$, while $\dim H^4(X,\Q)_{tr} =22$, because $\rank A^2(X) =1$.\par 
If $X$ is  a general element of $V_1$ the Fano variety $F(X)$ is not isomorphic to $S^{[2]}$, with $S$ a K3 surface, because $NS(F(X)$ equals the invariant lattice that has rank 1 while $NS(S^{[2]})$ has rank at least 2.
\end{proof}
\begin{rk} (1) R.Laterveer in [Lat 2, Cor.5.2] proved that $A_0(Z)_{hom}=0$, with $Z = F(X)/<\sigma_1>$ and $X \in V_1$.\par
 (2)Since  $\rank  A(X) =1$ the family $V_1$ contains  contains no plane. The family $V_1$ contains a Pfaffian cubic fourfold $Y$ (see [BCS 2,Prop.5.1]. Since $Y \in \sC_{14}$ there is an isomorphism $F(Y) \simeq S^{[2]}$, where $S$ is a K3 surface  
 and an isomorphism $t_2(S) (1) \simeq t(Y)$. The K3 surface $S$ has a motive of abelian type.\par
(3) The family $V_1$ contains  the Klein cubic fourfold $X_K$ whose equation is of the form 
$$x^2_0x_1 +x^2_1x_2+x^2_2x_3+x^2_3x_4+x^2_4x_0 +x^3_5=0.$$
It is a triple cover of $\P^4$  branched along the Klein cubic 3-fold. The fourfold $X_K$ admits also   a symplectic automorphism  of order 11 and  therefore has an associated K3 surface. \end{rk}
Let
$$ \sigma_2:[x_0,x_1,x_2,x_3,x_4,x_5] \to:[x_0,x_1,x_2, x_3,\zeta x_4,\zeta x_5] ;$$
be the automorphism of $\P^5$,  where $\zeta$ is a primitive cubic root of 1. Let $V_2$ be the family of cubic fourfolds invariant under $\sigma_2$. The family $V_2$ has diimension 4.  Every  cubic fourfold $X\subset \P^5$ is defined by an equation of the form

\begin{equation} \label{equation} F(x_0,x_1,x_2,x_3,x_4,x_5)= f(x_0,x_1,x_2,x_3) + g(x_4,x_5) =0,\end{equation}

\noindent with $f$ and $g$ homogeneous of degree 3 .The automorphism $\sigma_2$ acts  non-symplectically  on $F(X)$.

\begin {prop} A general cubic $X \in V_2$ has an associated K3 surface $S$, belongs to $\sC_{14}$ and is rational. The motives $h(X)$ and $h(S)$ are both of abelian type.\end {prop}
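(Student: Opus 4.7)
The plan is to exploit the rich $\sigma_2$-equivariant geometry of $X$ to locate a labelling of discriminant $14$ in $A(X)$, and then combine Beauville--Donagi with the paper's Remark 2.11 and Section 5 to conclude.

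First I would describe the fixed loci. On $X$, the $\sigma_2$-fixed locus is the union of the (generically smooth) cubic surface $W : \{f=0\}$ inside the fixed $\P^3 = \{x_4 = x_5 = 0\}$ and the three roots $P_1, P_2, P_3$ of $g$ on the fixed line $\{x_0 = \cdots = x_3 = 0\}$. On $F(X)$, the fixed locus consists of the $27$ lines of $W$ (as isolated fixed points) together with three copies of $W$, the $i$-th copy parametrising the pencil of lines joining $P_i$ to a variable point of $W$. Moreover, for each line $\ell \subset W$ and each index $i$, the plane $\Pi_{\ell, P_i} = \langle \ell, P_i \rangle$ is contained in $X$, so $X$ contains up to $81$ planes.

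Second, using the Abel--Jacobi isomorphism $H^4(X, \Z) \simeq H^2(F(X), \Z)$, the divisor classes coming from the three two-dimensional components of the fixed locus in $F(X)$ pull back to produce many independent algebraic classes in $A(X)$. Together with $h^2$ and the plane classes $[\Pi_{\ell, P_i}]$, a rank count shows $\rank A(X) \geq 12$ for general $X \in V_2$. By Remark 2.11 this already yields a Hodge-theoretically associated K3 surface $S$. To specialise to $\sC_{14}$, I would exhibit a primitive rank-$2$ sublattice $\langle h^2, v \rangle \subset A(X)$ of discriminant $14$, with $v$ the class of a rational quartic scroll built from lines through the $P_i$ sweeping out a twisted cubic in $W$; the intersection numbers $v \cdot h^2 = 4$ and $v^2 = 10$ then give $\disc = 3 \cdot 10 - 16 = 14$. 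By Beauville--Donagi, $F(X) \simeq S^{[2]}$ and $X$ is rational, and the results proved in Section 5 then supply the motivic isomorphism $t(X) \simeq t_2(S)(1)$.

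Finally, because $V_2$ is only $4$-dimensional and contains highly symmetric cubics such as the Fermat, it lies in the intersection of many Hassett divisors, forcing the associated K3 surface $S$ to have Picard rank at least $19$; hence $h(S)$ is of abelian type by [Ped], and combined with $t(X) \simeq t_2(S)(1)$ this implies $h(X)$ is of abelian type as well. The main obstacle I expect is the explicit construction and intersection-theoretic verification of the discriminant-$14$ class $v$, since this requires exhibiting a specific family of rational surfaces on a general $X \in V_2$ compatible with the $\sigma_2$-action; the remaining steps follow from the already-cited results.
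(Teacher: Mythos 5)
Your description of the fixed loci and of the planes through the points $P_i$ matches the paper's starting point, but the two essential steps of the proposition are not actually carried out, and one of them is argued incorrectly. For membership in $\sC_{14}$ you defer to an unconstructed ``quartic scroll built from lines through the $P_i$ sweeping out a twisted cubic in $W$''; note that the union of the lines joining a fixed point to a twisted cubic is a cone of degree $3$, not a quartic scroll, so even the sketch does not produce a class with $v\cdot h^2=4$ and $v^2=10$. The paper avoids any such construction: among the $27+27$ planes $P^{(i)}_1,P^{(j)}_2$ spanned by the lines of the cubic surface and the points $p_1,p_2$, two planes with $i\ne j$ meet in at most one point, and since a smooth cubic fourfold carries at most $10$ pairwise one-point-intersecting planes [DM, Cor.~7.3], some pair $P^{(i)}_1,P^{(j)}_2$ must be disjoint; a cubic fourfold containing two disjoint planes lies in $\sC_8\cap\sC_{14}$, is rational and has an associated K3 surface by [Hass, 1.2]. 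This is the step you would need to supply; Beauville--Donagi and the results of Section 5 require $F(X)\simeq S^{[2]}$, which you have not verified for this particular $X$, whereas the disjoint-planes argument gives rationality and the labelling directly.

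Your argument for abelian type is a non sequitur: the fact that $V_2$ is $4$-dimensional does not place it in enough Hassett divisors to force the associated K3 surface to have Picard rank at least $19$. The paper only establishes $\rank A(X)\ge 13$ for general $X\in V_2$, i.e.\ Picard rank at least $12$ for $S$, which is far from the range where [Ped] applies. The paper's actual argument is entirely different: the equation $f(x_0,\dots,x_3)+g(x_4,x_5)=0$ exhibits $X$ as the image of a degree-$3$ dominant rational map from $X_1\times X_2$, where $X_1\subset\P^4$ is a cubic threefold and $X_2\subset\P^2$ is an elliptic curve (following Colliot-Th\'el\`ene and Laterveer). Blowing up the indeterminacy locus yields a finite surjective morphism onto $X$, so $h(X)$ is a direct summand of $h(X_1\times X_2)\oplus h(Y)(1)$ with $h(Y)$ having no transcendental part; since the transcendental part of $h(X_1)$ is $h_1(J)(1)$ with $J$ an abelian variety [GG] and $h(X_2)$ is of abelian type, $t(X)$ is a direct summand of $h_1(J)\otimes h_1(X_2)(2)$ and hence of abelian type, and the isomorphism $t_2(S)(1)\simeq t(X)$ coming from $X\in\sC_{14}$ then transfers this to $h(S)$. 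Without this construction, or an equivalent one, the abelian-type assertion remains unproved.
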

\begin{proof} The automorphism $\sigma_2$ acts non-symplectically because $\det \sigma_2 =\zeta^2$ and $\sigma^*(F) =F$.\par
The fixed locus of $\sigma_2$  on $X$ is $$  P:  \{x_0= x_1= x_2=x_3=0  \   ,   \  G(x_4,x_5)=0 \} $$
consisting of 3 points $p_1,p_2,p_3$ and the cubic surface
$$S  :  \{x_4 =x_5 =0  \  ,  \   f (x_0,x_1,x_2,x_3)=0 \}. $$
The 3 points  $p_i$ lie on the line $x_0=x_1=x_2=x_3=0$ which is not contained in $X$. Let $l$ be a line joining  a point $p_i$ and a point $Q \in S$. The line $l$ is invariant and is contained in $X$, because otherwise the third point $T$  of intersection in $l \cap X$  would be a fixed point of $\sigma$ while $T \ne p_i, T \notin S$. Therefore on $F(X)$  the automorphism induced by $\sigma_2$ 
has three fixed surfaces $K_1,K_2,K_3$ each one isomorphic to the rational cubic $S$. Moreover each fixed line on $S$ determines a fixed point on $F(X)$ so we get  27 isolated fixed points.\par 
Let $K_1=F(X)_{p_1}$ and  $ K_2=  F(X)_{p_2}$  be the two cubic surfaces in $F(X)$ which are the locus of lines joining the point $p_1$ and the point $p_2$  with  the points on $S$. There are 27 planes contained in $X$ and passing trough $p_1$ i.e.  the planes $P^{(i)}_1= \P^2_{<l_i,p_1>}$, with $(l_1,\cdots, l_{27})$ the lines on $S$. Similarly there are 27 planes contained in $X$ and passing trough $p_2$,  i.e.  the planes $P^{(i)}_2=\P^2_{<l_i,p_2>}$. The planes  $P^{(i)}_1$ and  $P^{(i)}_2$ meet along the line $l_i$ while two planes $P^{(i)}_1$ and $P^{(j}_2$, with $i \ne j$ , $i,j =1\cdots 27$,  meet in one point  $r_{ij}$ if  $l_i \cap l_j \ne \emptyset$. Since  a smooth cubic fourfold has at most 10 planes with one point intersection, see [DM, Cor.7.3] there  are some of the planes  $P^{(i)}_1, P^{(j)}_2$ which are contained in $X$ and  do not intersect. Every cubic fourfold $X$ containing  two disjoint planes  $P,P'$ belongs to $\sC_8 \cap \sC_{14}$, hence it  is rational and has an associated K3 surface $S$, see [Hass ,1.2].\par 
The invariant lattice $T_{\sigma_2}(X) \subset H^2(F(X) ,\Z)$ has rank 13, see [BCS 1, Ex.6.5], and hence the isomorphism in \ref{divisors} implies  $\rank A(X) \ge 13 $.\par 
Let $X_1,X_2$  be the cubics of dimension 3 and 1, respectively
$$X_1 :f(x_0,x_1,x_2,x_3)  - y^3 =0  \   ,  \ X_1 \subset \P^4 $$
$$X_2 :  g(x_4,x_5) + z^3 =0   \   ,   \  X_2 \subset \P^2$$
The rational map  $ \P^4 \times \P^2 \dashrightarrow \P^5 $
$$\Phi  :[x_0,x_1,x_2,x_3,y; x_4,x_5,z] \to [x_0/y ,x_1/y, x_2/y, x_3/y ,x_4/z,x_5/z]$$
induces a dominant rational map  $\Psi$ of degree 3 of $X_1\times X_2$ on the hypersurface in $\P^5 : (z_0,z_1,z_2,z_3,z_4,z_5),$ defined by the equation
$$f(z_0,z_1,z_2,z_3) +g(z_4,z_5) =0$$
\noindent which is isomorphic to $X$, see [CT, Prop.1.2] and [Lat 3,Prop.6].\par
The  indeterminacy locus  of $\Psi$ is
$$ Y = (X_1 \cap (y=0) \times (X_2\cap (z=0) .$$
Blowing up $X_1 \times X_2 $ along $Y$ gives a finite surjective morphism  $ \overline{X_1\times X_2)}\to X$, with $ \overline{X_1\times X_2} =\Bl_Y(X_1 \times X_2)$. Therefore $h(X)$ is a direct summand of $h( \overline{X_1\times X_2})$ and 
$$h (\overline{X_1\times X_2} ) \simeq h(X_1 \times X_2) \oplus h(Y)(1).$$
Since  $ (X_1 \cap (y=0)$ is a cubic surface and $ (X_2\cap (z=0)$ consists of 3 points the motive $h(Y)$  has no transcendental part. Therefore the transcendental motive $t(X)$ is a direct summand of $h(X_1 \times X_2)$ where
$$h(X_1 \times X_2) = \sum _{ 0 \le i \le 8}h_i(X_1 \times X_2 )  \   ;   \   h_i(X_1 \times X_2) =\sum_{p+q=i}h_p(X_1) \times h_q(X_2) .$$
The motive of the cubic 3-fold $X_1$  has a Chow-K\"unneth decomposition
$$h(X_1) \simeq \un \oplus \L \oplus N \oplus \L^2\oplus \L^3,$$
\noindent where  $N =h_1(J)(1)$ with $J$ an abelian variety isogenous to the intermediate Jacobian  $J^2(X_1)$, see[GG]. The motive $h(X_2)$ of the elliptic curve $X_2$ is isomorphic to $\un \oplus h_1(X_2) \oplus \L $ and is of abelian type. Therefore 
the transcendental motive $t(X)$ is a direct summand of $(h_1(J) \otimes h_1(X_2) (2)$ and is of abelian type. Since $X \in \C_{14}$, there is an isomorphism of motives
$$t_2(S) (1) \simeq t(X)$$
\noindent  and therefore also  the K3 surface $S$ has a motive of abelian type.
\end{proof}
\begin{prop} \label{V_3}Let 
$$\sigma_3: [x_0,x_1,x_2,x_3,x_4,x_5] \to :[x_0,x_1,x_2,\zeta x_3,\zeta x_4,\zeta^2 x_5]; $$ 
be the automorphism of $\P^5$, where $\zeta$ is a primitive cubic root of 1 and let $V_3$ be the family of cubic  fourfolds  with a non-symplectically action of the automorphism $\sigma_3$.The family $V_3$  has dimension 7. For a general element $X \in V_3$ the lattice $A(X)$ has rank  7  and it is generated by the classes of 27 cubic scrolls. The fourfold $X$  belongs to the Hasset divisor $\sC_{12} $, contains no plane and has no  Hodge theoretically associated K3 surface. There is an isomorphism of motives in $\sM_{rat}(\C)$
$$t_2(S) (1)\simeq t(X)$$
\noindent where $S$ is  a K3 surface.\end {prop}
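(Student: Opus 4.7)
The plan is to derive the general $\sigma_3$-invariant cubic, compute $\rank A(X)$ via the induced non-symplectic action on the Fano variety $F(X)$, identify the $27$ cubic scrolls geometrically, and invoke B\"ulles' theorem for the motivic conclusion.

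First, assigning $\sigma_3$-weights $0,0,0,1,1,2 \pmod 3$ to $x_0,\dots,x_5$, the invariant cubic monomials form a $21$-dimensional space; quotienting by the centralizer of $\sigma_3$ in $PGL_6$ recovers $\dim V_3 = 7$ as in [GAL]. The fixed locus of $\sigma_3$ on $\P^5$ splits into three disjoint linear subspaces: a plane $\Lambda_1 : (x_3=x_4=x_5=0)$, a line $\Lambda_2 : (x_0=x_1=x_2=x_5=0)$, and the isolated point $p_5$. For generic $X \in V_3$, $X \cap \Lambda_1$ is a smooth plane cubic $C_0$, $X \cap \Lambda_2$ consists of three points, and $p_5 \notin X$.

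Next, using \ref{action} one verifies $\det(\sigma_3)/\lambda_{\sigma_3}^2 = \zeta \ne 1$, so $\sigma_3$ acts non-symplectically on $F(X)$, namely by a primitive cube root of unity on $H^{2,0}(F(X))$. Consequently the $\sigma_3$-invariant lattice $T_{\sigma_3}(X) \subset H^2(F(X),\Z)$ is purely of type $(1,1)$ and is contained in $NS(F(X))$, with equality for generic $X \in V_3$. By [BCS 1] one has $\rank T_{\sigma_3}(X) = 7$, and the isomorphism \ref{divisors} translates this into $\rank A(X) = 7$. The $27$ cubic scrolls generating $A(X)$ arise in analogy with the Eckardt-point argument of Proposition 2.2: a cubic surface $W \subset X$ emerges from the $\sigma_3$-fixed geometry, and each of its $27$ lines $\ell_i$ determines a cubic scroll $T_i \subset X$ swept out by a one-parameter family of lines of $X$ meeting $\ell_i$. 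One verifies $T_i \cdot T_i = 7$ and $T_i \cdot h^2 = 3$, so that $\langle h^2, T_i \rangle$ has discriminant $12$, placing $X$ in $\sC_{12}$.

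That $X$ contains no plane is checked by lattice-theoretic means: a plane $P \subset X$ would yield a class with $P \cdot P = 3$, $P \cdot h^2 = 1$, hence a primitive rank-two sublattice of discriminant $8$, and a direct inspection of the Gram matrix of the $27$ scrolls rules out such a vector for generic $X \in V_3$. The numerical condition (*) fails for $d = 12$ (since $4 \mid 12$), so $X$ has no Hodge-theoretically associated K3 surface; this is confirmed by [Laz, Prop.2.9] via the absence of a suitable primitive embedding of the transcendental lattice into the K3 lattice. On the other hand, $12 = 2^2 \cdot 3$ satisfies condition (**) with $f = 2$, $g = 3$, $n = 1$, since $3 \mid 2 \cdot 1^2 + 2 \cdot 1 + 2 = 6$. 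B\"ulles' theorem (see \ref{Brauer}) then yields an equivalence $\sA_X \simeq D^b(S,\alpha)$ for a K3 surface $S$ and a Brauer class $\alpha$, which by [Bull] entails the motivic isomorphism $t_2(S)(1) \simeq t(X)$ in $\sM_{rat}(\C)$.

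The main obstacle is the explicit identification of the $27$ cubic scrolls and the verification that they generate all of $A(X)$ rather than merely a proper sublattice. This requires isolating the correct cubic surface $W$ from the $\sigma_3$-fixed geometry and lifting its $27$-line combinatorics faithfully into the algebraic cohomology of $X$, in analogy with but technically more involved than the Eckardt-point case.
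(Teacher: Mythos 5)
Your overall architecture is reasonable, and the rank computation via the invariant lattice $T_{\sigma_3}(X)\subset H^2(F(X),\Z)$ combined with \ref{divisors} is a legitimate (and arguably more self-contained) route than the paper's, which simply quotes [BG, Lemma 4.9 and Prop.~4.11] for the structure of $A(X)$; your verification of condition (**) for $d=12$ and the appeal to B\"ulles for $t_2(S)(1)\simeq t(X)$ also match the paper. However, there are two genuine gaps. First, the inference ``$d=12$ fails (*), so $X$ has no Hodge-theoretically associated K3 surface'' is invalid: since $\rank A(X)=7$, the fourfold carries many labellings, and it would suffice for \emph{one} of them to have admissible discriminant (compare the Eckardt family, which lies in $\sC_8\cap\sC_{12}$, neither discriminant admissible, yet the question of an associated K3 surface is settled only by a lattice computation). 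The citation you offer to repair this, [Laz, Prop.~2.9], goes in the wrong direction: it asserts that $\rank A(X)\ge 12$ \emph{forces} a primitive embedding of $T(X)$ into the K3 lattice, so it cannot be used to exclude one when $\rank A(X)=7$. The actual content here is the explicit lattice computation of [BG, Lemma 4.7] showing $T(X)(-1)$ admits no primitive embedding into $(E_8)^2\oplus U^3$, and your argument does not supply a substitute for it.

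Second, the construction of the 27 cubic scrolls --- on which your $\sC_{12}$ membership, the generation of $A(X)$, and the ``no plane'' claim all rest --- is not carried out, as you yourself concede, and the proposed source is dubious: for a general $X\in V_3$ the fixed locus of $\sigma_3$ on $X$ is the plane cubic $f(x_0,x_1,x_2)=0$ in $\Lambda_1$ together with the three points of $X\cap\Lambda_2$, so no cubic surface ``emerges from the $\sigma_3$-fixed geometry'' in analogy with the Eckardt or $\phi_2$ cases. Consequently the Gram matrix inspection you invoke to exclude a plane (equivalently, a discriminant-8 sublattice) has no input data. The paper sidesteps both issues by citing [BG]: the absence of planes follows there from $X\notin\sC_d$ for all $d\equiv 2\ (6)$, and the scrolls and the lattice $A(X)$ are produced explicitly. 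Your intersection numbers $T_i\cdot T_i=7$, $T_i\cdot h^2=3$ and the resulting discriminant 12 are correct for a cubic scroll, but without an actual exhibition of such a scroll in $X$ (or the [BG] reference) the proof of $X\in\sC_{12}$ is incomplete.
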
 
\begin{proof} Every $X \in V_3$ is defined by an equation of the form
$$F(x_o,x_1,x_2,x_3,x_4,x_5)=  f(x_0,x_1,x_2) +g(x_3,x_4) +x^3_5 +$$
$$+x_3x_5 l_1(x_0,x_1,x_2) +x_4x_5 l_2(x_0,x_1,x_2) =0,$$
\noindent where $f,g$ are homogeneous of degree 3 and $l_1,l_2$ are linear forms. The automorphism $\sigma_3$ is non- symplectic because $\det \sigma_3 =\zeta $ and $\sigma_3(F) =F$.The family $V_3$ has dimension 7, see [GAL,Thm.3.8].\par
In [BG, Lemma 4.9 and Prop.4.11] it is proved that  $X \notin \sC_d$, for  $d\equiv 2 (6)$. Therefore $X $ contains no plane. The fourfold  $X$ contains 27 families of cubic scrolls $\{[T_i], [T^*_i] \}$, $i = 1,\cdots 27$, such that $A(X)$ is generated by $ [T_i]$.Therefore $X \in \sC_{12}$. Since $d =12$ satisfies the numerical condition in (**) there is an isomorphism of motives as in \ref{motivic}. \par
There is no  primitive embedding of $T(X)(-1)$ in a K3 lattice, hence  there is no associated K3 surface, see [BG,lemma 4.7]. \par
\end{proof}
\begin{rk}The family $V_3$ contains the cubic fourfold $Y$ defined by the equation 
$$Y:  f(x_0,x_1,x_2) +g(x_3,x_4) +x^3_5=0$$
\noindent which is  also invariant under the symplectic automorphism 
$$\tau:[x_0,x_1,x_2,x_3,x_4,x_5] \to [x_0,x_1,x_2,\zeta x_3,\zeta x_4,\zeta x_5].$$
The cubic fourfold $Y$ is rational and has a motive of abelian type, see [CT] and [Lat 3]. The fixed locus $\sigma_3$ on $F(X)$, for a general  $X \in V_3$, is parametrized by $C \times D$,
were $C,D$ are the elliptic curves defined by
$$C: f(x_0,x_1,x_2) =0  \   in  \  \P^2: x_3=x_4=x_5=0,$$
$$D : g(x_3,x_4) +x^5=0 \ in   \   \P^2: x_0=x_1=x_2=0$$ 
\noindent and coincides with the fixed locus for the action of $\tau$ on $F(Y)$. In [Kaw,Prop.3.2] it is proved that the quotient   variety  $F(Y)/<\tau>$ has a crepant resolution $\tilde Y \to F(Y)/<\tau>$, with $\tilde Y$ birational  to  the
generalized  Kummer variety $K^2(C \times D)$. The same argument shows that the  motive of the quotient $F(X)/<\sigma_3>$ is a direct summand of $K^2(C \times D)$, hence it is of abelian type.

\end{rk}

\medskip

Let
$$\sigma_4 : [x_0,x_1,x_2,x_3,x_4,x_5] \to :[x_0,x_1,\zeta x_2,\zeta x_3,\zeta^2x_4, \zeta^2 x_5] .$$
be the automorphism  of $\P^5$, where $\zeta$ is a primitive cubic root of 1. There are two families of cubic fourfolds  which are invariant under the action of $\sigma_4$, a first  one   where $\sigma_4 $ acts non-symplectically and a 
second one where $\sigma _4$ acts  symplectically, see [GAL,Thm. 3.8]. Let $V_4$ be the family of cubic fourfolds with a non-symplectical action of $\sigma_4$. Every $X \in X$ has an equation of the form
$$F(x_0,x_1,x_2,x_3,x_4,x_5) = x_2 (L_2(x_0,x_1) +x_3 M_2(x_0,x_1) +x^2_4L_1(x_0,x_1)+$$
$$+x_4x_5 M_1(x_0,x_1) +x^2_5 N_1(x_0,x_1)+ x_4N_2(x_2,x_3) +x_5P_2(x_2,x_3) =0$$
\noindent where $L_2,M_2,N_2, P_2$ are homogeneous polynomial of degree 2 and $L_1,M_1,N_1$ are linear factors. The automorphism $\sigma_4$ acts non-symplectically on $F(X)$ because $\det \sigma_4 =1$ and 
$\sigma^*_4(F) =\zeta$.
\begin{prop}\label {V_4} The family $V_4$ has dimension 6. A general fourfold $X \in V_4$ contains nine disjoint planes $F_1,\cdots, F_9$ and  $A(X)$  has a basis given by
$\{h^2,[F_1],\cdots,[F_8]\}$. Therefore $X \in \sC_{14}$, is rational and has an associated K3 surface.\end{prop}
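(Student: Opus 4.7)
The plan is to verify in order that $\dim V_4 = 6$, that a general $X \in V_4$ contains nine pairwise disjoint planes, that $\{h^2, [F_1], \ldots, [F_8]\}$ spans $A(X)$, and finally that $X \in \sC_{14}$ and has an associated K3 surface. The dimension count is a parameter count: $L_2, M_2, N_2, P_2$ each contribute $3$ coefficients and $L_1, M_1, N_1$ each contribute $2$, giving $18$ in total. Subtracting $1$ for the scaling of $F$ and $11$ for the centralizer of $\sigma_4$ in $\operatorname{PGL}_6$ (block-diagonal matrices preserving the three eigenspaces $\langle x_0, x_1\rangle$, $\langle x_2, x_3\rangle$, $\langle x_4, x_5\rangle$), one obtains $\dim V_4 = 6$, matching [GAL, Thm 3.8].

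To produce the nine planes I would search among $\sigma_4$-invariant planes of ``type $(1,1,1)$'': a triple $([\alpha{:}\beta], [\gamma_0{:}\gamma_1], [\delta_0{:}\delta_1]) \in (\P^1)^3$ determines the plane parametrized by
$$(s,t,u) \longmapsto (\alpha s,\, \beta s,\, \gamma_0 t,\, \gamma_1 t,\, \delta_0 u,\, \delta_1 u).$$
Substituting into $F$ and collecting the three monomials $s^2 t$, $s u^2$ and $t^2 u$, the condition that this plane lies in $X$ becomes three bihomogeneous equations in $(\P^1)^3$ of tridegrees $(2,1,0)$, $(1,0,2)$ and $(0,2,1)$ respectively. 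By B\'ezout their common zero locus has class
$$(2 h_1 + h_2)(h_1 + 2 h_3)(2 h_2 + h_3) = 9\, h_1 h_2 h_3,$$
so for a general $X \in V_4$ there are exactly nine such planes $F_1, \ldots, F_9 \subset X$. Any two distinct type-$(1,1,1)$ planes meet only where all three of their one-dimensional $\sigma_4$-eigen-subspaces coincide, which is empty for distinct triples; hence the $F_i$ are pairwise disjoint.

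Using $F_i \cdot F_i = 3$, $F_i \cdot F_j = 0$ for $i \neq j$, $F_i \cdot h^2 = 1$ and $h^2 \cdot h^2 = 3$, the rank-two sublattice $\langle h^2, F_1 + F_2\rangle \subset A(X)$ has Gram matrix $\bigl(\begin{smallmatrix} 3 & 2 \\ 2 & 6 \end{smallmatrix}\bigr)$ of discriminant $14$, so $X \in \sC_{14}$; Hassett's theorem then provides the associated K3 surface, and rationality of a general $X \in \sC_{14}$ is classical (see [RS]). To pin down $\rank A(X) = 9$ with the stated basis I would apply the topological Lefschetz fixed-point formula: the fixed locus of $\sigma_4$ on $X$ is the disjoint union of the three $\sigma_4$-eigen-lines $\{x_2=x_3=x_4=x_5=0\}$, $\{x_0=x_1=x_4=x_5=0\}$, $\{x_0=x_1=x_2=x_3=0\}$, each contained in $X$, of total Euler characteristic $6$; matching with $L(\sigma_4) = 4 + \operatorname{tr}(\sigma_4^*|H^4(X,\Q))$ forces $\operatorname{tr}(\sigma_4^*|H^4) = 2$. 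Combined with $\sigma_4^*$ acting on $H^{3,1}(X)$ by $\det\sigma_4/\lambda_{\sigma_4} = \zeta^{-1}$, an eigenvalue count gives $\dim H^{2,2}(X)^{\sigma_4} = 9$.

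The main obstacle is showing that for a general $X \in V_4$ the space $A(X) \otimes \Q$ coincides with the $\sigma_4$-invariant part of $H^{2,2}(X)$: this is a Noether--Lefschetz-type statement on the six-dimensional family $V_4$, asserting that an algebraic class not invariant under $\sigma_4$ would, after averaging over $\langle \sigma_4 \rangle$, yield an invariant class linearly independent of $\{h^2, [F_1], \ldots, [F_9]\}$, imposing an additional Hodge condition carving out a proper subfamily. Once this is in place, the lower bound from the nine planes forces $\rank A(X) = 9$, and the single integral relation among the ten classes $\{h^2, [F_1], \ldots, [F_9]\}$ expresses $[F_9]$ in terms of the stated basis.
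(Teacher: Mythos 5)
Your route is genuinely different from the paper's, which disposes of the whole statement by citing [BG, Prop.~4.13] for the nine disjoint invariant planes and the relation $h^2=\tfrac13\sum_i[F_i]$; your B\'ezout count on $(\P^1)^3$, giving $(2h_1+h_2)(h_1+2h_3)(2h_2+h_3)=9\,h_1h_2h_3$, and your Lefschetz fixed-point computation (three invariant lines in $X$, Euler characteristic $6$, hence $\dim (H^4)^{\sigma_4}=9$) are a reasonable self-contained substitute. But two steps are not sound as written.

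First, the disjointness argument fails. Two distinct type-$(1,1,1)$ planes, i.e.\ spans $\ell_1\oplus\ell_2\oplus\ell_3$ and $\ell_1'\oplus\ell_2'\oplus\ell_3'$ with $\ell_i,\ell_i'$ lines in the $i$-th eigenspace, intersect as soon as $\ell_i=\ell_i'$ for a \emph{single} index $i$ (in a point if exactly one index coincides, in a line if two do); it is false that they ``meet only where all three eigen-subspaces coincide.'' Distinctness of the triples therefore does not give disjointness. What you actually need is that for general $X$ the nine solution points in $(\P^1)^3$ have pairwise distinct coordinates in each of the three factors, and that requires a separate general-position argument or an explicit example --- this is precisely what [BG, Prop.~4.13] supplies. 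The gap is not cosmetic: both the discriminant-$14$ Gram matrix and the rank-$9$ computation use $F_i\cdot F_j=0$.

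Second, you explicitly leave open that for general $X\in V_4$ one has $A(X)\otimes\Q=(H^{2,2}(X))^{\sigma_4}$, and this is the substance of the basis claim. The Lefschetz count gives $\rank A(X)\ge 9$ (the invariant part of $H^4$ lies in $H^{2,2}$ since $H^{3,1}$ sits in a nontrivial eigenspace) and the nine planes give nine independent classes, but the matching upper bound for the general member is a genuine Noether--Lefschetz statement for the $6$-dimensional equivariant family; it is standard (surjectivity of the equivariant period map, or a monodromy/very-general argument as in [BCS 1]) but must be proved, not merely described. Even granting $\rank A(X)=9$, concluding that $\{h^2,[F_1],\dots,[F_8]\}$ is an integral basis rather than a $\Q$-basis needs the relation $3h^2=\sum_i[F_i]$ together with a saturation check, again as in [BG]. (A minor slip: $\sigma_4^*$ acts on $H^{3,1}$ by $\det\sigma_4/\lambda_{\sigma_4}^2=\zeta$, not $\det\sigma_4/\lambda_{\sigma_4}$; this does not affect your eigenvalue count.)
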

\begin{proof}  A plane in $\P^5$ that is invariant for the action of $\sigma_4$ has an equation of the form
$$ \{a x_0=b x_1, cx_2=dx_3,e x_4=f x_5 \}.$$
There are  exactly nine  disjoint planes $F_1,\cdots,F_9$ that are invariant  under $\sigma_4$ and are contained in $X$. Moreover 
$$h^2 = (1/3) \sum_{1 \le i \le 9}[F_i],$$
\noindent  see [BG, Prop. 4.13].
\end{proof}

\section{Singular K3 surfaces}
In this section we describe  families of special cubic fourfolds  $X$ with an associated K3 surface $S$ whose  Picard rank equals to 20, i.e.  a singular K3 surface.  These K3 surfaces have a motive of abelian type, see [Ped]. Since the motive $h(X)$ is associated to the motive of  $S$, in the sense of \ref{motivic}, also  $h(X)$ is of abelian type. \par
The following result appears in [ABP ,Thm.1.1] :  let  $3\le n \le 20$ and let $d_k \geq 8, d_k \equiv 0,2[6]$, where 
$$d_3,..,d_n= 6 \displaystyle{\prod_i} p_i^2  \   or  \ 6 \displaystyle{\prod_i} p_i^2+2,$$ 
\noindent with $p_i$ a prime number. Then
 
\begin{equation}\label{family}
     \displaystyle{\bigcap_{k=1}^{n}} \mathcal{C}_{d_k} \ne \emptyset,
\end{equation}
 
Note that the first two discriminants $d_1$ and $d_2$  may  be chosen completely arbitrarily.  A very general cubic fourfold $X$ has $CH_2(X)=\mathbb{Z}$, and each time that we take the intersection with a  divisor $\mathcal{C}_{d_i}$ we know that we are adding an algebraic cycle inside $A(X)$. Therefore the generic cubic fourfold in $\displaystyle{\bigcap_{k=1}^{n}} \C_{d_k} \ne \emptyset$ has $\rank(CH_2(X))=n+1$. If one of the discriminants $d_i$ defines a divisor of cubics with associated K3 surface, it means that all the cubic fourfolds inside the intersection \ref{family} have associated K3 surfaces and, by the definition itself, the generic associated K3 surface has Picard  rank equal to $n$.\par
Let   $\displaystyle{\bigcap_{k=1}^{20}} \C_{d_k} \ne \emptyset$, with at least one of the divisor $\sC_{d_k}$ parametrizing cubic fourfolds with an associate K3 surface. For every $X \in \displaystyle{\bigcap_{k=1}^{20}} \C_{d_k}$  the lattice $A(X)$ of algebraic cycles has rank 21, hence the primitive cohomology of the associated K3 surface has rank 19 and the Neron-Severi group has rank 20.\par
The singular K3 surface $S$  associated to  a cubic fourfold $X\in \displaystyle{\bigcap_{k=1}^{20}} \C_{d_k}$ admits a Shioda-Inose structure. In particular there is a birational map from $S$ to a Kummer surface, which is the quotient of an Abelian variety by an involution.\par 
Therefore the motive $h(X) $ is associated to $ h(S) $ and are both of abelian type. The Fano variety $F(X)$ is birational to the Hilbert scheme of length 2 subschemes of a Kummer surface.\par
\medskip

A different way to construct cubic fourfolds with associated singular K3 surfaces is to consider a finite group $G$ of symplectic automorphisms acting on $X$ such that the covariant lattice $S_G(X)$ has maximal rank. The moduli space parametrizing cubic fourfolds
with a symplectic group $G$  has dimension  $20 -\rank S_G$. The  case  $\rank S_G =20$ corresponds to cubic fourfolds $X$ such that the lattice of algebraic cycles $A(X) =<S_G,h^2>$ , with $h$ the class of hyperplane section, has rank 21. All these cubics have an associated K3 surface $S$ with Picard rank equal to 20. Therefore all these cubic fourfolds have a motive of abelian type.\par
In [LZ,Thm.1.8] a list  is given of all the groups $G$ which are the symplectic automorphism group of a cubic fourfold $X$ with $\rank S_G(X) =20$. There  are six of such groups and 8 associated cubic fourfolds. The list contains the Fermat cubic $X_F$, the Klein cubic fourfold
$X_K$ and the Clebsch diagonal cubic $X_C$. The Fermat cubic  $X_F$ is the only smooth cubic fourfold with a symplectic automorphism of order 9. It is rational and is  contained in the intersection of all Hasset divisors $\sC_d$, with $d $ satisfying (*). The Klein
cubic fourfold $X_K$ is the only cubic with a symplectic automorphism of order 11. It is contained  in all  $\sC_{3d}$  with $d$ even and $d >4$.\par
The diagonal cubic $X_C$ has equation 
$$F(x_0,x_1,x_2,x_3,x_4,x_5) = x^3_0+x^3_1+x^3_2+x^3_3+x^3_4+x^3_5 -(x_0+x_1+x_2+x_3+x_4+x_5)^3=0.$$
\noindent and has a symplectic automorphism of order 7.It is i also invariant with respect to a anti-symplectic involution, see [LZ,Cor.6.9]. Therefore $X_C$ contains an Eckardt points and  belongs to $\sC_8 \cap \sC_{12}$. \par

\section {K3 surfaces with a symplectic automorphism}
In the previous sections we described several families of cubic fourfolds $X$, with a finite group of automorphisms, such that the motive $ h(X)$ is associated to the motive of a K3 surface $S$. Here we consider  the case of K3 surface $S$ such that 
$ S^{[2]} \simeq F(X)$ and show that $t_2(S)(1) \simeq t(X)$. If  $S$ is equipped with a symplectic automorphism of prime order $p$ then also $X$ has a symplectic automorphism of order $p$. In the case $p=3$ we give examples of K3 surfaces  $S$ whose motive is associated to different cubic fourfolds $X_1 \in \sC_{d_1} $ and  $X_2 \in \sC_{d_2}$ with $d_1 \ne d_2$.\par
Let $\sF_g$  be  the  moduli space of smooth polarized K3 surfaces $(S,L)$ of  genus  $g$ and degree $d$, with $L^2 =d = 2g -2$ and  $g = n^2 +n +2$. There is a  rational map of degree 2
\begin{equation} \label{moduli} \phi :  \sF_g \dashrightarrow \sC_d, \end{equation}
\noindent where $d =2(n^2+n+1)$, sending $(S,L)$ to the class  $\vert X\vert $ of a cubic fourfold $X \in \sC_d$. There is an isomorphism
\begin{equation} \label {Fano} S^{[2]} \simeq F(X).\end{equation} 
 \begin{lm}\label{iso}  Let $X$ be  a cubic fourfold and let $F =F(X)$ be the Fano variety of lines. Suppose that $F \simeq S^{[2]}$, with  $S$ a K3 surface.  Then there is an isomorphism of motives $ t_2(S)(1)\simeq  t(X)$ in $\sM_{rat}(\C)$. 
\end{lm}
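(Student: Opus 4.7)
My plan is to compute the Chow motive of $F\simeq S^{[2]}$ explicitly and then transfer the transcendental information to $X$ via the Abel-Jacobi (Fano) correspondence.

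The first step uses the fact that the Hilbert-Chow morphism $S^{[2]}\to S^{(2)}$ is the blow-up of $S^{(2)}$ along the diagonal, which yields a Chow-motivic decomposition
$$h(F)\simeq h(S^{[2]})\simeq \mathrm{Sym}^2\, h(S)\oplus h(S)(1) \quad \text{in } \sM_{rat}(\C).$$
Plugging in the reduced Chow-K\"unneth decomposition $h(S)=\un\oplus\L^{\oplus\rho(S)}\oplus t_2(S)\oplus\L^2$ and expanding the symmetric square, every summand is either a Tate twist, a Tate twist of $t_2(S)$, or $\mathrm{Sym}^2 t_2(S)$. Since $H^2(S^{[2]},\Q)=H^2(S,\Q)\oplus\Q\delta$, the piece realising the transcendental sub-Hodge structure of $H^2(F,\Q)$ is a canonical direct summand of $h_2(F)$ isomorphic to $t_2(S)$.

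The second step passes from $F$ back to $X$. Denoting by $p:P_F\to F$ and $q:P_F\to X$ the projections from the universal line, the Fano correspondence $p_*q^*$ defines a morphism $h(X)\to h(F)(1)$ in $\sM_{rat}(\C)$, and by Beauville-Donagi it induces the Hodge isomorphism $H^4(X,\Q)_{prim}\iso H^2(F,\Q)_{prim}(-1)$. I would then argue, using the motivic refinement of the Fano correspondence developed by Shen-Vial [SV], that $p_*q^*$ intertwines the Chow-K\"unneth projector cutting out $t(X)$ with the projector onto the transcendental summand of $h_2(F)$ identified in Step 1, giving the isomorphism $t(X)\iso t_2(S)(1)$.

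The main obstacle is upgrading the cohomological Abel-Jacobi isomorphism to an isomorphism in $\sM_{rat}(\C)$. Concretely, one must compute the composition $q_*p^*\circ p_*q^*$ restricted to the transcendental summand and check that it acts as multiplication by a non-zero rational scalar, so that both $p_*q^*$ and $q_*p^*$ are invertible at the Chow-motivic level. As an alternative, more conceptual route one may observe that the hypothesis $F\simeq S^{[2]}$ forces a Fourier-Mukai equivalence $\sA_X\simeq D^b(S)$ via Addington's theorem, after which [Bull] directly converts the derived equivalence into the motivic isomorphism $t(X)\simeq t_2(S)(1)$.
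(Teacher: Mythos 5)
Your skeleton agrees with the paper's up to the point where the real work begins: the paper likewise uses the de Cataldo--Migliorini decomposition to realise $h(S)$ as a direct summand of $h(F)\simeq h(S^{[2]})$, composes with the universal-line correspondence $P_F\in A_5(F\times X)$, and projects to obtain a morphism $f:t_2(S)(1)\to t(X)$ in $\sM_{rat}(\C)$. The gap is in how you propose to show $f$ is an isomorphism. Checking that $q_*p^*\circ p_*q^*$ acts as a non-zero scalar on the transcendental summand is a cohomological computation, but you need the identity to hold in $\CH^4(X\times X)_\Q$ after composing with the transcendental projectors; without finite-dimensionality of $h(X)$ (which is not a hypothesis here, and indeed is one of the things the lemma is used to establish elsewhere in the paper) there is no a priori control of $\End(t(X))$ that lets you promote the cohomological scalar to a rational equivalence. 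The paper takes a different and genuinely necessary route: since $A^i(t_2(S)(1))$ and $A^i(t(X))$ vanish for $i\neq 3$, it suffices by [BP, Lemma 2.3] to check that $f$ induces an isomorphism on the single non-trivial Chow group. This is done concretely via the Shen--Vial Fourier decomposition $A_0(F)=A_0(F)_0\oplus A_0(F)_2\oplus A_0(F)_4$, the identification $(A_0(F)_2)_{hom}=\langle [c_S,x]-[c_S,y]\rangle$ with $c_S$ the Beauville--Voisin cycle, the injectivity of $A_0(\tilde S)\to A_0(S^{[2]})$ from [Ba, Thm.~2.1] (which gives injectivity of $x\mapsto [c_S,x]$), and the isomorphism $(A_0(F)_2)_{hom}\simeq A_1(X)_{hom}$ from [BP]. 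None of these ingredients appears in your plan, and they are what actually closes the argument.

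Your alternative route also has a hidden step. An isomorphism $F\simeq S^{[2]}$ does give a Hodge-theoretically associated K3 surface, hence by Theorem \ref{FMequiv} an equivalence $\sA_X\simeq D^b(S')$ for \emph{some} K3 surface $S'$, and [Bull] then yields $t(X)\simeq t_2(S')(1)$; but nothing in that chain identifies $S'$ with the given $S$. To conclude $t_2(S)\simeq t_2(S')$ you would still need to argue that $S$ and $S'$ are Fourier--Mukai partners (e.g.\ via a Hodge isometry of transcendental lattices and derived Torelli) and then invoke the derived invariance of the Chow motive of a K3 surface --- additional non-trivial inputs that should be made explicit rather than absorbed into the word ``directly.''
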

\begin{proof} In the incidence diagram
 \begin{equation}\label{incidence} \CD      P_F@>{q}>>  X\\
@V{p}VV      @.   \\
 F (X)  \endCD           \end{equation}
\noindent  the universal line $P_F$,  as a correspondence in $A_5(F\times X)$, gives a map 
$$P_* : h(F)(1) \to h(X)$$
 By the results in [deC-M,Thm.6.2 ] $h(S)$ is a direct summand of $h(S^{[2]})\simeq h(F)$. Therefore we get a map  
$$ \CD  h(S)(1)@>>>h(F)(1) @>{P_*}>>h(X) \\ \endCD$$
  Let 
$$h(S) \simeq \un \oplus \L^{\oplus \rho(S)} \oplus t_2(S) \oplus \L^2$$
be a refined Chow-K\"unneth decomposition, as in [KMP]. By composing with the inclusion $t_2(S)(1) \to h(S)(1)$ and the surjection $h(X) \to t(X)$ we get  a map of motives  in  $\sM_{\rat }(\C)$,
$$f :  t_2(S)(1) \to t(X)$$ 
For two distinct points $x,y \in S$ let us denote by $[x,y] \in S^{[2]}\simeq F $ the point of $F$ that corresponds to the subscheme $x \cup y \subset S$. If  $x=y$  then $[x,x]$ denotes the element in $A^4(F)$ represented by any point corresponding to a non reduced subscheme of length 2 on $S$ supported on $x$. With these notations the special degree 1 cycle $c_F\in A^4(F)$ (see [SV,Lemma A3]), given by any point on a rational surface $W \subset F$, is represented by the point $[c_S, c_S]  \in F$, where $c_S$  is the Beauville-Voisin cycle in $A_0(S)$ such that $c_2(S) = 24  c_S$. We also have (see [SV,Prop. 15.6]) 
$$(A_0(F)_2 )_{hom} = <[c_S,x]- [c_S,y]>,$$
\noindent where $A_0(F) = A_0(F)_0 \oplus A_0(F)_2 \oplus A_0(F)_4$ is the Fourier decomposition, as in [SV]. \par
 We claim  that the map $\phi : A_0(S) \to A_0(S^{[2]})=A_0(F)$ sending $[x]$ to $[c_S, x]$ is injective and hence the surjective map $A_0(S)_0 \to (A_0(F)_2 )_{hom} $, sending $ [x] -[y]$ to $[c_S,x]- [c_S,y]$ gives an isomorphism
 $$A_0(S)_0  \simeq (A^4(F)_2)_{hom}.$$
  \noindent The variety $S^{[2]}$ is the blow-up of the symmetric product $S^{(2)}$ along the diagonal $\Delta \cong S$. Let $\tilde S$ be the inverse image of $\Delta$ in $S^{[2]}$. Then $\tilde S$ is the image of the closed embedding $s \to [c_S, s]$. By a result proved in [Ba,Thm.2.1] the induced map of 0-cycles $A_0( \tilde S) \to A_0(S^{[2]})$ is injective. Therefore the map $\phi$  is injective.\par
 \noindent From the isomorphisms $A_0(S)_0 \simeq (A_0(F)_2)_{hom}$  and $A_0(F)_2)_{hom}\simeq A_1(X)_{hom}$ (see [BP]) we get
 $$A^3(t_2(S)(1) = A^2(t_2(S))=A_0(S)_0 \simeq A^3(X) _{hom}\simeq A^3(t(X))$$
 Since $A^i(t_2(S)(1)) =A^i(t(X))=0$ for $i \ne 3$ the map $f  : t_2(S)(1) \to t(X)$ gives an isomorphism on all Chow group, see [BP,Lemma 2.3]. Therefore  $t_2(S)(1) \simeq t(X)$ 
    \end{proof}
Let $X \in \P^5_{\C}$ be a smooth cubic fourfold and let $F = F(X)$ be its Fano variety of lines. The irreducible holomorphic symplectic variety $F$ is deformation equivalent to the Hilbert scheme $S^{[2]}$ of two points on a K3 surface $S$.
A   finite order automorphism $\sigma$  of $S$ induces an automorphism $\sigma^{[2]}$ on the Hilbert scheme  $S^{[2]}$ that is called {\it natural}.
\begin{defn} An automorphism $\sigma$   of $F(X)$ is called  {\it standard} if $(F(X), \tilde \sigma) $ is deformation equivalent  to
$(S^{[2]},\phi^{[2]})$, where $\phi$ is an automorphism of $S$. \end{defn}
\begin{ex} (1) The  Klein cubic fourfold $X$, whose equation is of the form 
$$x^2_0x_1 +x^2_1x_2+x^2_2x_3+x^2_3x_4+x^2_4x_0 +x^3_5=0,$$
\noindent admits a symplectic automorphism of order 11 which cannot be standard because there is no symplectic automorphism of order 11 on a K3 surface.\par
 (2) The  non-symplectic automorphisms of order 3 on the cubic fourfolds belonging to the families $V_1$ and $V_3$ in Sect.3 are non-standard, see [BCS 1,Cor.7.6]\ \end{ex}
\begin{defn} Let $X\in \P^5_{\C}$ be a smooth cubic fourfold and let $F(X)$ be its Fano variety of lines equipped with the polarization $g$ which is by definition the  class in $H^2(F,\Z)$ of the  restriction to $F$ of the Plucker line bundle $\sL$  on the Grassmannian $G(2,6)$ of lines in $\P^5$. An automorphism $\psi$ of $F(X) $ is {\it polarized} if it preserves the Plucker  polarization, i.e.  $\psi^*g = g$.\end{defn} 
An automorphism of $F(X)$ is  polarized if and only if it is  induced from an automorphism of the cubic fourfold $X$, see [Fu,Lemma1.2]. \par
 \begin{lm}\label{polarized} Let $(S,L)$ be  a polarized K3 surface of genus $g$ and degree $d$, with  $L^2 =2g-2=d$ and  $d =2(n^2+n+1)$. Let $X \in \sC_d$ be the corresponding cubic fourfold in \ref{moduli}.  Let $\sigma$ be a symplectic automorphism of $S$  of prime order $p$ and let $\sigma^{[2]}$ be the induced automorphism on $S^{[2]} \simeq F(X)$. The  natural automorphism  $\sigma^{[2]}$ on $F(X) $ is polarized and hence it is induced  by an  automorphism  $\tau$  on $X$. The fixed locus of the automorphism $\tau$ on $X$ consists of a finite number of isolated points.\end{lm}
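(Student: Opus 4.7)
To verify first that $\sigma^{[2]}$ preserves the Pl\"ucker polarization, I would make $g$ explicit on the Hilbert scheme side. Under the isomorphism $F(X) \simeq S^{[2]}$ attached to the map $\phi$ of \ref{moduli}, the Picard lattice decomposes as $\Pic(S) \oplus \Z\delta$, where $2\delta$ is the class of the exceptional divisor of the Hilbert--Chow morphism $S^{[2]} \to S^{(2)}$, and the Pl\"ucker class $g$ corresponds to a combination $g = aL - b\delta$ with explicit integers $a,b$ depending on $n$ (the generalisation of the Beauville--Donagi formula valid for $\sC_{14}$). Since $\sigma$ is an automorphism of the polarized K3 surface $(S,L)$ we have $\sigma^{*}L = L$, while any natural automorphism $\sigma^{[2]}$ tautologically preserves the Hilbert--Chow exceptional divisor, so $(\sigma^{[2]})^{*}\delta = \delta$. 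Combining these yields $(\sigma^{[2]})^{*}g = g$, so $\sigma^{[2]}$ is polarized, and [Fu, Lemma 1.2] recalled just above then produces a unique automorphism $\tau$ of $X$ inducing $\sigma^{[2]}$ under the Abel--Jacobi correspondence.

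For the fixed locus I first note that $\tau$ is symplectic on $X$: via the Abel--Jacobi isomorphism $H^{3,1}(X) \simeq H^{2,0}(F(X)) = H^{2,0}(S)$, the action of $\tau^{*}$ on $H^{3,1}(X)$ coincides with the action of $\sigma^{*}$ on $H^{2,0}(S)$, which is trivial by assumption. Since $\tau$ is induced from a linear automorphism $\tilde\tau$ of $\P^{5}$, the fixed locus $X^{\tau} = X \cap \operatorname{Fix}(\tilde\tau)$ is the intersection of $X$ with a disjoint union of linear eigenspaces of $\tilde\tau$; the question becomes whether any eigenspace can meet $X$ in a positive-dimensional component. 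I would invoke Ouchi's theorem [Ou, Thm 8.4]: since $\langle\tau\rangle$ is a symplectic group of order $p \geq 3$, the rank bound $\rank S_{\langle\tau\rangle}(X) \geq 12$ from \ref{notinvariant} forces the eigenspace decomposition of $\tilde\tau$ to be sufficiently non-degenerate with respect to $X$, and combined with the Lefschetz trace formula $\chi(X^{\tau}) = \sum_{i}(-1)^{i}\operatorname{tr}\bigl(\tau^{*} \mid H^{i}(X,\Q)\bigr)$---whose right-hand side is computed directly from the eigenvalues of the natural $\sigma^{[2]}$ on $H^{*}(S^{[2]})$---this rules out every positive-dimensional component and gives the finiteness of $X^{\tau}$.

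The main obstacle is the case $p = 2$, where Ouchi's rank bound is not available. To handle this, one must argue directly from the invariance condition \ref{action}, either by comparison with the classification of Marquand to exclude the $\phi_{2}$-type configuration (the symplectic involution with fixed cubic surface and line) in the family $d = 2(n^{2}+n+1)$, or by verifying numerically that the Hassett divisor $\sC_{12}$ housing $\sF_{2}$ does not overlap with the divisors $\sC_{d}$ for $d = 2(n^{2}+n+1)$ in a way that would allow a $\phi_{2}$-invariant $X$ to satisfy $F(X) \simeq S^{[2]}$ via the construction $\phi$.
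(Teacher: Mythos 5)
The first half of your argument coincides with the paper's: the Pl\"ucker class on $F(X)\simeq S^{[2]}$ is $g=2f-(2n+1)\delta$ with $f=|L|$ (Hassett's formula), $\sigma^{*}$ fixes $f$ and $(\sigma^{[2]})^{*}$ fixes $\delta$, so $\sigma^{[2]}$ is polarized and [Fu, Lemma 1.2] produces $\tau$. That part is fine.

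The finiteness of the fixed locus is where your proposal has a genuine gap. The Lefschetz trace formula only computes $\chi(X^{\tau})$ and cannot by itself exclude positive-dimensional components, and Ouchi's bound $\rank S_{\langle\tau\rangle}(X)\ge 12$ is a statement about the covariant lattice inside $H^4(X,\Z)$ that places no constraint on the dimension of the fixed locus; the claim that it forces the eigenspaces of $\tilde\tau$ to be ``sufficiently non-degenerate'' is not substantiated. In fact the implication you are after --- symplectic of prime order $p\ge 3$ implies finite fixed locus on $X$ --- is false: the family $F_3$ in Section 5 of this very paper consists of cubic fourfolds with a symplectic automorphism of order $3$ whose fixed locus is an entire cubic surface. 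The one hypothesis that actually forces finiteness, and which your argument never uses, is that $\tau$ is induced by a \emph{natural} automorphism $\sigma^{[2]}$. The paper's proof runs through exactly this: by Boissi\`ere the natural $\sigma^{[2]}$ has only finitely many fixed points on $S^{[2]}$ ($m_3=27$, $m_5=14$, $m_7=9$), and if $x\in X$ is fixed by $\tau$ then the curve $C_x\subset F(X)$ of lines through $x$ is $\sigma^{[2]}$-invariant; comparing the resulting invariant curves with the finite fixed locus of $\sigma^{[2]}$ yields the finiteness of $X^{\tau}$. (It is precisely by this criterion that the paper later concludes that the order-$3$ automorphisms of the family $F_3$ are not natural.) Your worry about $p=2$ is legitimate, but the paper disposes of it by restricting to $p=3,5,7$ at the outset --- a restriction that is essential, since for a symplectic involution the differential at a fixed point is $-\mathrm{id}$ and the fixed locus of $\sigma^{[2]}$ on $S^{[2]}$ then contains positive-dimensional components, so the mechanism above is unavailable; your proposed detours through Marquand's classification or intersections of Hassett divisors are not carried out and do not amount to a proof.
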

\begin{proof}If $\sigma$ is  a symplectic automorphism of $S$ of  prime order $p$, then $p =3,5,7$. There is an isomorphism
$$H^2(F(X), \Z) \simeq H^2 (S^{[2]},\Z) \simeq H^2(S,\Z) \oplus \Z \delta,$$
 Here $\delta = (1/2 )|\Delta | \in H^2(S^{[2]},\Z)$, with $\Delta \subset S^{[2]} $ the divisor consisting of  zero- dimensional closed subschemes supported only at a single point. Then
 
\begin{equation} \Delta =  \P(T_S) = \bigcup_{p \in S} \Delta_p, \end{equation}
\noindent where $\P(T_S)$ is the geometric projectivization of the tangent bundle $T_S$.  For each $p \in S$  $\Delta_p$ is  the rational curve consisting of 0-dimensional subschemes $\xi \in \Delta$ such that $\supp (\xi) =p$, see [FV,2.1].\par
The Fano variety $F(X)$ has a polarization  $g$, where
$$g =2 f - (2n+1) \delta,$$
\noindent  with  $ f = \vert L \vert \in H^2(S,\Z)$, see [Hass, Thm.29]. The symplectic automorphism $\sigma$  of the polarized K3 surface $S$ fixes the classes $f$ and   the automorphism $\sigma^{[2]}$  of  $S^{[2]} $ fixes the class of $\Delta$. Therefore  the induced automorphism fixes the polarization $g$. Therefore the  natural automorphism $\sigma^{[2]}$ on $F(X)$ is polarized and hence it is induced by  an automorphism $\tau$ of $X$. \par
 The automorphism $\sigma^{[2]}$ has a finite number $m_p$  of fixed points, with $m_3 =27, m_5 = 14,m_7 =9$, see [Bois, Ex.2]. Every subspace $\xi \in S^{[2]}$  fixed under  $\sigma^{[2]}$ is the union of reduced subspaces with disjoint supports of the form\par 
(1) $\{p \}$ with $\sigma(p)=p$ ,\par
(2)$ \xi_p$ with $\xi_p$ a non reduced subscheme of length 2 supported on $p$ with $\sigma(p)=p$,  see [Bois, 4.2 and Rk. 6].\par
The fixed locus  of $\sigma$ on $S$ consists on a finite number of isolated points and and the fixed 1-dimensional subschemes $\xi_p$ in (2) correspond to $\Delta_p \subset S^{[2]}$, with $\sigma(p) =p$.\par
 Let  $x \in X$ and let $\tau$ be the  automorphism induced on $X$. Let $\tau(x) =x$  and let $l \in F(X) $ be a line trough $x$. Then  also  the line $ \sigma^{[2]}(l)$ passes trough $x$. Therefore the 1-dimensional subscheme $C_x\subset F(X)$ of all lines trough $x$
corresponds, under the isomorphism $F(X) \simeq S^{[2]}$, to a 1-dimensional subscheme  fixed under $\sigma^{[2]}$. So there  are only a finite number of points on $X$ fixed under $ \tau$. 
\end{proof}
\subsection{The case p=3}  Let $S$ be a projective K3 surface admitting a symplectic automorphism $\sigma$  of order 3 and let $Y $ be the K3 surface which is a  minimal desingularization of the quotient $S/\sigma$. The surface$S$ has six isolated fixed points $P_1,\cdots,P_6$ and the surface $S/\sigma$ has 6 singularities of type $A_2 $. The surface $Y$ contains 12 irreducible curves, i.e. 6  disjoint pairs of rational curves meeting in a point. In the diagram
\begin{equation} \label{quotient} \CD
\tilde S@>{	\alpha}>>  S    \\
@VVV                  @V{\pi}VV  \\
Y@>{\beta}>>S/\sigma    \endCD \end{equation}
\noindent the surface $\tilde S$ is the blow-up of $S$ at the six  fixed points $P_i$ of $\sigma$. \par
The family $\sS$ of polarized K3 surface with a symplectic automorphism of order 3 is the union of countably many components of dimension 7. Similarly the family $\sT$ of  K3 surfaces that are 
quotients of K3 surfaces with a symplectic automorphism of order 3 is the union of countably many components of dimension 7. The correspondence between K3 surfaces in $\sS$ and in $\sT$ has been described in [GM].
A  K3  surface $S$ is a generic member of the component $\sS_{2d}\subset \sS$ of those surfaces having  a polarization $L$ of degree $2d $ if and only if the corresponding K3 surface $Y$ is a generic member of the component $\sT_{6d} \subset \sT$ of those surfaces having a polarization $H$ of degree $6d$. Conversely a K3 surface $Y \in \sT_{2e} \subset \sT$  with a polarization of degree $2e$ corresponds to a K3 surface $S \in \sS_{6e}\subset \sS$ with a polarization of degree $6e$, see see [GM,Cor.5.3].\par
Let $S \in \sF_g$, where  $g =n^2 +n +2$ with   a symplectic automorphism $\sigma$  of order 3 and let $X \in \sC_d$ be the image of $S$ under the rational map in \ref{moduli}. Then $F(X) \simeq S^{[2]}$ and $\sigma$ induces a natural automorphism $\sigma^{[2]}$ on $F(X)$ with 27 isolated fixed points. The automorphism $\sigma^{[2]}$ is induced by an automorphism $\tau$ of $X$, which in turn is induced by a linear automorphism of $\P^5$. Then one can choose coordinates $[x_0,x_1,x_2,x_3,x_4,x_5]$ in such a way that $(X,\tau)$ belongs to one of the families of cubic fourfolds with a symplectic automorphism of order 3, as described in [Fu, Thm.1.1]. There are two families  of cubic fourfolds with a symplectic automorphism of order 3, whose fixed locus on $F(X)$ consists of 27 isolated fixed points.\par
The  first family $F_3$ has dimension  8. Every cubic in $F_3$ is invariant under the automorphism of $\P^5$ 
\begin{equation} \label{auto} \tau_1:  [x_0,x_1,x_2,x_3,x_4,x_5] \to [x_0,x_1,x_2,x_3,\zeta x_4, \zeta^2 x_5]\end{equation}
\noindent with $\zeta^3 =1$ and has an equation of the form
$$F(x_0,x_1,x_2,x_3,x_4,x_5) =  f(x_0,x_1,x_2,x_3) +x^3_4 +x^3_5+ x_4x_5 L(x_0,x_1,x_2,x_3)=0,$$
with $f$ of degree 3 and $L$ of degree 1. The fixed locus of the automorphism  $\tau_1$ on $X$ is the cubic surface $S :  f(x_0,x_1,x_2,x_3) =0$. Since this  fixed locus is not finite the automorphism  induced  by \ref{auto} on $F(X)$ cannot coincide with $\sigma^{[2]}$ of $F(X)$, hence it is not natural. In [Mong,7.2.8] it is proved that the  order 3 symplectic automorphism of the cubics in $F_3$ are standard.\par
The  second family $G_3$  has dimension 8 . Every cubic in $G_3$ is invariant under the automorphism of $\P^5$ 
$$ \tau_2 :  [x_0,x_1,x_2,x_3,x_4,x_5] \to [x_0,x_1,\zeta x_2,\zeta x_3,\zeta^2 x_4, \zeta^2 x_5] $$
and has an equation of the form
$$F(x_0,x_1,x_2,x_3,x_4,x_5) = f_1(x_0,x_1) +f_2(x_2,x_3) +f_3(x_4,x_5) + \sum_{i,j,k} a_{ijk}x_i x_j x_k$$
\noindent where $f_1,f_2,f_3$  have degree 3 and $i =0,1; j= 2,3 ; k= 4,5$. The  fixed locus  of $\tau_2$ on $X$ is finite and   consists of the 9 isolated points $P_{i,1},P_{i,2}, P_{i,3}$ satisfying the the equation $f_i=0$, for $i =1,2,3$ .
The fixed locus on $F(X)$ corresponds to the 27 lines $\overline {P_{i,k}P_{j,l}}$, for $1 \le i<j \le 3$ and $k,l=1,2,3$, see [Fu,IV-(5)].
 Each point on $X$ fixed by the automorphism is the intersection of 3 lines in $F(X) $ fixed by the automorphism.\par
Since the moduli space of polarized K3 surfaces with a symplectic automorphism of order 3 has dimension 7 (see [GS],Cor.5.1]), the image in $ G_3$  has codimension 1 and contains the cubics $X$ such that the automorphism $\tau_2$ is natural.\par
Let $S$ be K3 surface  with a polarization  $L$ of degree $L^2= 2d =2g -2 $, where $d=n^2 +n +1$, with a symplectic automorphism $\sigma$ of order 3. Then  a desingularization $Y$ of $S/\sigma$ has a polarization $H$ of degree $6d$ and genus 
$g' =3n^2+3n +4$. Similarly if  $S$ is  a K3 surface  with a polarization  $L$ of degree $L^2= 6e =2g -2 $, where $g =m^2 +m +2 $, then the desingularization $Y$ of  $S/\sigma$ has a polarization $H$ of degree $2e$  and genus 
$g'=( m^2+m+1)/3 +1$. For  $ m=4$ we get  $g= 22$ and $g' = 8$. Therefore $S \in \sF_{22} $ and  $Y \in \sF_8$ \par
\begin{ex}Let $ S$ be a   polarized K3 surface in $\sF_{22}$, with a symplectic automorphism $\sigma$ of order 3 and let $\tilde X \in \sC_{42}$ be the cubic fourfold corresponding to $S$ under the map in \ref{moduli}. There is an isomorphism $F(\tilde X) \simeq S^{[2]}$ and an order 3  automorphism $\tau$ of $\tilde X$ induced by the automorphism $\sigma^{[2]}$ of $F(\tilde X)$.  Let $Y$ be a desingularization of $ S/\sigma$ and let $X \in \sC_{14}$ be the cubic fourfold corresponding to $Y$ under the map $ \sF_8  \dashrightarrow  \sC_{14}$.Then 
\begin{equation} \label{Fano} F(\tilde X) \simeq  S^{[2]}   \   ,    \    F(X) \simeq Y^{[2]}. \end{equation}
The  8-dimensional family $G_3$ of cubic fourfolds with a symplectic automorphism of order 3 contains two divisors , one corresponding to the cubics $\tilde X$ in $\sC_{42} $ and one to the Pfaffian cubics $X \in \sC_{14}$.\par 
Since $42 \equiv 0 (6)$ if  $S$ and $ S'$ have the same image in $\sC_{42}$, under the rational map $\sF_{22} \dashrightarrow \C_{42}$, then $ S$ and $ S'$ are Fourier- Mukai partners, see [Brak,Thm.1.1]. Therefore $D^b( S) \simeq D^b(  S')$ 
and the Chow motives $h( S)$ and $h(  S')$ are isomorphic.\par
 The quotient map $  S  \to  S/\sigma$ and the resolution $Y \to \ S/\sigma$  give a generically 3:1 rational map $\pi :  S \dashrightarrow  Y$ and therefore a rational map
$$\pi^{[2]} : S^{[2]} \dashrightarrow Y^{[2]},$$
\noindent  which is defined outside the 27 fixed points on $ S^{[2]}$. The rational map  $\pi^{[2]}$
induces, via the isomorphisms in \ref{Fano}, a rational map  $F(\tilde X) \dashrightarrow F(X)$.\par
 Since the transcendental motive $t_2(-)$ of a surface is a birational invariant the maps $\tilde S \to  S$   and $\tilde S \to Y$    in \ref{quotient} induce a map $\theta : t_2(S )\to t_2(Y)$ that is the projection onto
 a direct summand, see [Ped,Prop.1]. Therefore 
$$  t_2(S)=t_2(Y) \oplus N.$$
By a result of Huybrechts a symplectic automorphism acts trivially on the Chow group of 0-cycles $A_0(S) =CH_0(S) \otimes \Q$. Since $A_0(t_2(S) =A_0(S)_{hom} $ , $A_i(t_2(S))=0$, for $ i \ne 0$, and $A_0(S)^{\sigma^*} =A_0(S)$ we get $A_i(N)=0$, for all $ i$. Therefore
$N=0$,and  hence 
$$t_2(S) =t_2(Y)$$
The cubic fourfolds $\tilde X$ and $X$ are both rational.The transcendental motives $t(\tilde X)$ and $t(X) $ are isomorphic because  $t_2(  S) (1) \simeq t( \tilde X)$ and $t_2(Y) (1) \simeq t(X)$ (see \ref{iso}), with  $t_2(S) \simeq t_2(Y)$ . Therefore
\begin{equation} t_2(S) (1) \simeq t(\tilde X) \simeq t(X).\end{equation}
The K3 surface $S$ is associated to the motive of $X$ and of $\tilde X$ in the sense of \ref{motivic}.\par
Let $\sA_{\tilde X}$ and $\sA_X$ be the  Kuznetsov components of $ \tilde X$ and $X$, respectively. Since $X$ and $\tilde X$ belong to  a Hassett divisors $\sC_d$, with d satisfying the numerical condition in (*), there are isomorphisms 
$$\sA_{\tilde X} \simeq D^b(S)   \   ;   \   \sA_X \simeq D^b(Y),$$
\noindent where
$$D^b(\tilde X) = <\sA_{\tilde X}, \sO_{\tilde X}, \sO_{\tilde X}(1), \sO_{\tilde X}(2)>.$$
\noindent and similarly for a semi-orthogonal decomposition of $D^b(X)$. Let $G= <\sigma>$ and let $D^b_G(S)$ be the derived category of $G$-equivariant coherent sheaves on $S$. Then
$$ D^b(Y) \simeq D^b_G(S),$$
\noindent see [XH, 3.1]. 
\end{ex}

\end{document}